\title{Sparse pancyclic subgraphs of random graphs}
\author{
Yahav Alon
\thanks{School of Mathematical Sciences, Raymond and Beverly Sackler Faculty of Exact Sciences, Tel Aviv University,
Tel Aviv, 6997801, Israel. Email: yahavalo@tauex.tau.ac.il.}
\and Michael Krivelevich
\thanks{School of Mathematical Sciences, Raymond and Beverly
Sackler Faculty of Exact Sciences, Tel Aviv University, Tel Aviv,
6997801, Israel. Email: krivelev@tauex.tau.ac.il. Partially supported by USA-Israel BSF grant 2018267.}
}
\begin{document}
\maketitle
\newtheorem{thm}{Theorem}
\newtheorem{theorem}{Theorem}[section]
\newtheorem{propos}{Proposition}
\newtheorem{defin}{Definition}
\newtheorem{lemma}{Lemma}[section]
\newtheorem{corol}{Corollary}[section]
\newtheorem{corol*}{Corollary}
\newtheorem{thmtool}{Theorem}[section]
\newtheorem{corollary}[thmtool]{Corollary}
\newtheorem{lem}[thmtool]{Lemma}
\newtheorem{defi}[thmtool]{Definition}
\newtheorem{prop}[thmtool]{Proposition}
\newtheorem{clm}[thmtool]{Claim}
\newtheorem{claim}[thmtool]{Claim}
\newtheorem{conjecture}{Conjecture}
\newtheorem{problem}{Problem}
\newcommand{\Proof}{\noindent{\bf Proof.}\ \ }
\newcommand{\Remarks}{\noindent{\bf Remarks:}\ \ }
\newcommand{\Remark}{\noindent{\bf Remark:}\ \ }

\newcommand{\Dist}[1]{\mathsf{#1}}
\newcommand{\Bin}{\Dist{Bin}}
\newcommand{\HH}{\mathcal{H}}
\newcommand{\LL}{\mathcal{L}}
\newcommand{\pr}{\mathbb{P}}
\newcommand{\sm}{\text{SMALL}}
\newcommand{\cl}{\text{CLOSE}}
\newcommand{\bd}{\text{BAD}}
\newcommand{\lrg}{\text{LARGE}}
\newcommand{\enm}{\text{END}_M}
\newcommand{\tend}{\text{END}}
\newcommand{\lmax}{L_{\max}}
\newcommand{\kout}{G_{k\text{-out}}}
\newcommand{\dkout}{D_{k\text{-out}}}
\newcommand{\E}{\mathbb{E}}
\newcommand{\Var}{\text{Var}}
\newcommand{\cA}{\mathcal{A}}
\newcommand{\pex}{\text{Pex}}

\newcommand{\YA}[1]{{\color{red} \small{(YA: #1)}}}

\begin{abstract}
It is known that the complete graph $K_n$ contains a pancyclic subgraph with $n+(1+o(1))\cdot \log _2 n$ edges, and that there is no pancyclic graph on $n$ vertices with fewer than $n+\log _2 (n-1) -1$ edges. We show that, with high probability, $G(n,p)$ contains a pancyclic subgraph with $n+(1+o(1))\log_2 n$ edges for $p \ge p^*$, where $p^*=(1+o(1))\ln n/n$, right above the threshold for pancyclicity,
\end{abstract}

\section{Introduction} \label{sec-intro}

Say that a graph $G$ is pancyclic if $G$ contains a cycle of every length between 3 and $|V(G)|$. See monograph \cite{PANBOOK} for generic information on pancyclic graphs. In his influential paper on pancyclic graphs, Bondy \cite{BOND} asked what is the minimum number of edges in a pancyclic $n$-vertex graph. This can be rephrased as the minimum number of edges in a pancyclic subgraph of $K_n$, which motivates the following definition.

\begin{defin}
Say that a pancyclic graph $G$ on $n$ vertices has \textit{pancyclicity excess} $k$, and denote $\pex (G)=k$, if the minimum number of edges in a pancyclic subgraph of $G$ is $n+k$.
\end{defin}

In other words, a pancyclic subgraph of $G$ achieving the minimum number of edges is formed by a Hamilton cycle and $\pex (G)$ additional chords. In his paper, Bondy stated that, for every $n$,
$$
\log _2 (n-1) -1 \le \pex (K_n) \le \log _2 n + \log ^* n +O(1),
$$
and did not provide a proof. Shi \cite{SHI} later asserted the lower bound, by showing that an $n$-vertex graph with $n+k$ edges contains at most $2^{k+1}-1$ distinct cycles, so every subgraph of $K_n$ with fewer than $n+\log _2(n-1)-1$ edges must have fewer than $2^{\log _2(n-1)}-1=n-2$ cycles in total, regardless of their lengths. On the other hand, there are constructions for every $n$ of an $n$-vertex pancyclic graph with $\log _2 n + \log ^* n +O(1)$ chords (see e.g. \cite{PANBOOK}, Chapter 4.5), so $\pex (K_n) \le \log _2 n + \log ^* n +O(1)$. What is the exact value of $\pex (K_n)$ within this range is still an open question.

In this paper, we study the typical behaviour of $\pex (G)$, for $G\sim G(n,p)$. Cooper and Frieze \cite{CF} showed that, for $p\in [0,1]$, the limiting probability of $G\sim G(n,p)$ being pancyclic is
\begin{eqnarray*}
\lim _{n\to \infty} \pr (G(n,p)\text{ is pancyclic}) =
	\begin{cases}
        1 & \text{if } np-\log n-\log \log n \to \infty ;\\
        e^{-e^{-c}} & \text{if } np-\log n-\log \log n \to c;\\
        0 & \text{if } np-\log n-\log \log n \to -\infty .
    \end{cases}
\end{eqnarray*}
Here and later, if the base of the logarithm is not stated then it is the natural base. The above expression is also the limiting probability of $G$ being Hamiltonian, and the limiting probability of $\delta (G) \ge 2$. In particular, the three properties have the same threshold.

Clearly, $\pex (G) \ge \log _2 (n-1) -1$ for every pancyclic graph $G$ on $n$ vertices. On the other hand, Cooper \cite{COOPER1} showed that if $p$ is above the pancyclicity threshold, then with high probability $G\sim G(n,p)$ is a so called \textit{1-pancyclic} graph, that is, it contains a Hamilton cycle $H$ with the property that, for every $\ell \in [3,n-1]$, there is an edge $e\in E(G)$ such that $H\cup \{e\}$ contains a cycle of length $\ell$ and a cycle of length $n-\ell +2$. Observe that if $G$ is a 1-pancyclic $n$-vertex graph then $\pex (G) \le \lceil \frac{n-3}{2} \rceil$. So Cooper's result implies that $\pex (G(n,p)) \le \lceil \frac{n-3}{2} \rceil$ with high probability, for all $p$ above the pancyclicity threshold.

Our result in this paper shows that, for $G\sim G(n,p)$, the pancyclicity excess of $G$ is, typically, very close to the above stated general lower bound.

\begin{thm}\label{thm:main}
There is $p^*=p^*(n)=(1+\varepsilon (n))\cdot \frac{\log n}{n}$, where $\varepsilon (n) = O\left( \frac{1}{\log \log} \right)$, such that, if $p\ge p^*$ and $G\sim G(n,p)$, then with high probability $g$ is pancyclic with $\pex (G) = (1+o(1))\cdot \log _2 n$.
\end{thm}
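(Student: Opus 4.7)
The lower bound $\pex(G) \ge \log_2(n-1)-1 = (1-o(1))\log_2 n$ is immediate from Shi's deterministic inequality cited above, so the content of the theorem is the matching upper bound: with high probability, $G \sim G(n,p)$ contains a pancyclic subgraph consisting of a Hamilton cycle plus $(1+o(1))\log_2 n$ chords. My plan is to decouple Hamiltonicity from the chord selection by a two-round exposure of the edges, and then realize a flexible pancyclic template inside the resulting random chord pool.

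Write $1-p = (1-p_1)(1-p_2)$ with $p_1 = (1+\varepsilon(n)/2)\log n/n$ just above the Hamiltonicity threshold, so that $p_2 = \Theta(\log n/(n\log\log n))$, and couple $G$ with $G_1 \cup G_2$, where $G_i \sim G(n,p_i)$ are independent. By the Cooper--Frieze result quoted in the introduction, $G_1$ w.h.p.\ contains a Hamilton cycle $H$; fix such $H$ and label its vertices cyclically $0,1,\ldots,n-1$. Since $H$ depends only on $G_1$, the edges of $G_2$ are independent chords with respect to $H$: each potential chord at cyclic distance $d$ appears independently with probability $p_2$, and in expectation there are $np_2 = \Theta(\log n/\log\log n)$ chords of each distance class.

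The pancyclic skeleton is then built in two pieces. First, locate a small pancyclic absorber $A$ on $m = \mathrm{polylog}(n)$ consecutive vertices of $H$, formed by a Hamilton arc of $H$ plus $O(\log m) = O(\log\log n)$ chords drawn from $G_2$, such that (as in the Bondy-type constructions mentioned in the introduction) $A$ contains cycles of every length $3,\ldots,m$ and $u$-$v$ paths of a wide range of intermediate lengths between two designated endpoints $u,v$. Second, add $(1+o(1))\log_2 n$ further chords spanning the remainder of $H$ at a geometrically increasing sequence of cyclic distances, arranged so that their combinations with the absorber and with Hamilton arcs realize every cycle length in $[m,n]$. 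Short cycles come from $A$; long cycles come from a subset of the long-range chords plus a path of suitable length through $A$. Careful accounting keeps the total chord count at $(1+o(1))\log_2 n$.

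The main obstacle is realizing both pieces simultaneously inside $G_2$. A specific absorber at a specific location of $H$ appears in $G_2$ with only polynomially small probability, so we must search over many candidate locations for $A$ and rely on P\'osa-type rotations within $G_1$ to shift the absorber's position and to adjust the endpoints of the long-range chords until they align with chords actually present in $G_2$. Verifying that these flexibilities coexist---that the rotations of $H$, the placement of $A$, and the geometric chord distances can be coordinated so that all $n-2$ required cycle lengths are realized while the chord count stays at $(1+o(1))\log_2 n$---is the technical heart of the argument.
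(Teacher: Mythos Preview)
Your high-level picture---Hamilton cycle plus roughly $\log_2 n$ shortcut chords at geometrically growing distances, with a small gadget handling short lengths---matches the paper's template. The crucial difference is the \emph{order of operations}, and your order creates a gap that the proposal does not close.

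You fix a Hamilton cycle $H\subseteq G_1$ first and then try to locate an absorber on a window of $m=\mathrm{polylog}(n)$ consecutive $H$-vertices using $O(\log\log n)$ chords from $G_2$. But with $p_2=\Theta\!\big(\tfrac{\log n}{n\log\log n}\big)$, the expected number of $G_2$-edges inside a \emph{fixed} window of size $m$ is $\binom{m}{2}p_2=O\!\big((\log n)^{O(1)}/n\big)=o(1)$. Even allowing all $n$ starting positions and all $m^{O(\log\log n)}=n^{o(1)}$ choices of chord pattern, a first-moment count gives an expected number of realizable absorbers of order $n^{1+o(1)}\cdot p_2^{\,\Omega(\log\log n)}=n^{1+o(1)-\Omega(\log\log n)}\to 0$. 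So with high probability no polylog arc of any fixed $H$ carries the $O(\log\log n)$ $G_2$-chords your absorber needs. You point to P\'osa rotations in $G_1$ as a remedy, but rotations only rearrange which $G_1$-edges lie on $H$; they do not manufacture local $G_2$-density, and the proposal gives no mechanism for aligning a Hamilton cycle with a prescribed configuration of $G_2$-edges. This is exactly the step you flag as ``the technical heart,'' and as stated it does not go through.

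The paper avoids this obstruction by inverting the construction: it builds the shortcut structure \emph{before} the Hamilton cycle. In a very sparse round it first finds free-floating short cycles $C_0,\ldots,C_{K_0},C_{\mathrm{short}}$ (via {\L}uczak's cycle-spectrum theorem), then, using expander/tree-embedding arguments (Haxell), threads paths of prescribed lengths between designated vertices to create all the remaining $\ell_i$-shortcuts and the base cycle $C^*$. Only afterwards does it use the densest round $G_4$ to complete this prefabricated gadget to a Hamilton cycle. Thus the ``chords'' $e_i$ are never sought at specific positions on a pre-existing $H$; rather, $H$ is built to pass through them. If you want to salvage your plan, this inversion is the missing idea.
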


It is worth noting that we did not attempt to optimize the error term $\varepsilon (n)$, opting rather for a more simple proof. We therefore leave the question of whether $\pex (G(n,p))$ also typically satisfies $\pex (G(n,p))=(1+o(1))\cdot \log _2n$ for all $p$ above the pancyclicity threshold as an open question. 

\paragraph*{Paper structure} In Section \ref{sec-per} we introduce definitions and notation required for the rest of the paper, as well as auxiliary results to be used in our proof. In Section \ref{sec:outline} we introduce a construction of a subgraph of a given $n$-vertex graph, which, if successful, produces a subgraph with $n+(1+o(1))\cdot \log _2n$ edges. In Section \ref{sec:construction} we show that, with high probability, the construction is possible in $G(n,p)$ for $p\ge p^*$, and in Section \ref{sec:main} we complete the proof of Theorem \ref{thm:main} by showing that the constructed subgraph is pancyclic.

\section{Preliminaries} \label{sec-per}

\subsection{Definitions and notation}

The following graph theoretic notation is used throughout the paper.

Let $G$ be a graph and $U,W\subseteq V(G)$ vertex subsets. We denote by $E_G(U,W)$ the set of edges of $G$ with vertex in $U$ and one vertex in $W$, and $e_G(U,W)=|E_G(U,W)|$.
We let $G[U]$ denote the subgraph induced by $G$ on the vertex subset $U$, by $E_G(U)$ the set of edges in $G[U]$, and by $e_G(U)$ its size.
We denote by $N_G(U)$ the (external) neighbourhood of $U$, that is, the set of vertices in $V(G)\setminus U$ adjacent to a vertex of $U$.
The degree of a vertex $v\in V(G)$, denoted by $d_G(v)$, is the number of edges of $G$ incident to $v$.

We let $\mathcal{L}(G)$ denote the set of cycle lengths found in $G$, that is, $\mathcal{L}(G)$ is the set of integers $\ell$ such that $G$ contains a cycle of length $\ell$.

While using the above notation we occasionally omit $G$ if the identity of the specific graph is clear from the context.

We occasionally suppress the rounding signs to simplify the presentation.

Finally, we require the following definition.

\begin{defin}
A graph $G$ is called a $(k,\alpha)$-expander if every subset $U\subseteq V(G)$ with $|U|\le k$ satisfies $|N_G(U)| \ge \alpha \cdot |U|$.
\end{defin}

\subsection{Auxiliary results}

\begin{theorem}[Cycle lengths in $G(n,p)$, a corollary of {\L}uczak \cite{LUC}]\label{thm:luczak}
Let  $p=p(n)$ be such that $np\to \infty$ and let $G\sim G(n,p)$. Then, with high probability, $\left[ 3,0.99n \right] \subseteq \LL (G)$.
\end{theorem}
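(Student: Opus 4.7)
My plan is to split the target range $[3, 0.99n]$ of cycle lengths into a short regime and a long regime, handled by different tools. For short cycles, say $\ell \in [3, L]$ with $L=L(n) \to \infty$ slowly (e.g., $L=\log _2 n$), I would use a second moment calculation on the number $X_\ell$ of cycles of length $\ell$ in $G(n,p)$. Since $\E [X_\ell ]=(1+o(1))(np)^\ell /(2\ell )$ tends to infinity as $np\to \infty$, the standard Chebyshev/Janson-type bound gives $\pr (X_\ell =0)=o(1/n)$. A union bound over $\ell \in [3,L]$ yields $[3,L]\subseteq \LL (G)$ w.h.p.

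For long cycles, $\ell \in [L, 0.99n]$, I would use sprinkling combined with P\'osa-style rotations. Write $p \approx p_1+p_2$ with $p_2$ a small but positive fraction of $p$ (so that $np_2 \to \infty$), and decompose $G(n,p)=G_1 \cup G_2$ with $G_1 \sim G(n,p_1)$ and $G_2 \sim G(n,p_2)$ independent. In $G_1$, after deleting a negligible set of low-degree vertices, one obtains a good $(k,2)$-expander on $(1-o(1))n$ vertices, and P\'osa's lemma produces a path $P$ of length at least $(1-o(1))n$. For each target length $\ell$, iterated rotations of $P$ inside $G_1$ produce a ``closing set'' $S_\ell$ of $\Omega (n^2)$ pairs $(u,v)$ such that adding the single edge $uv$ would create a cycle of length exactly $\ell$. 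Since $p_2 \cdot |S_\ell | \to \infty$, the sprinkled graph $G_2$ contains an edge in $S_\ell$ with probability $1-o(1/n)$, and a union bound over $\ell \in [L,0.99n]$ finishes the argument.

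The main obstacle is producing closing sets $S_\ell$ of size $\Omega (n^2)$ simultaneously for every $\ell$ in the target range: a single rotation gives only linearly many closing pairs per length, which is too weak for the sprinkling union bound across $\Theta (n)$ values of $\ell$. The standard fix, which is essentially the heart of the Luczak-type argument, is that iterated P\'osa rotations in a good expander produce two endpoint sets each of linear size, so the candidate pairs for a given length form a quadratic-sized grid. Verifying the requisite expansion for $G_1$ when $np$ is barely going to infinity, and tracking the rotation argument uniformly across the linearly many target lengths, is the delicate part; once it is in place, the short and long regimes combine to give $[3,0.99n]\subseteq \LL (G)$ w.h.p.
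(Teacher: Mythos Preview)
The paper does not give its own proof of this statement: Theorem~\ref{thm:luczak} is listed under ``Auxiliary results'' and is quoted as a corollary of {\L}uczak's 1991 paper, with no argument supplied. So there is nothing to compare your proposal against in this paper.

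On the merits of your outline: the short-cycle part is essentially fine, but the quantitative claim is off. Chebyshev (or Janson) gives $\pr(X_\ell=0)\lesssim \ell/(np)^{\ell}$, which for $\ell=3$ is $O((np)^{-3})$ and is \emph{not} $o(1/n)$ when $np\to\infty$ arbitrarily slowly. Fortunately you do not need $o(1/n)$: the terms $\ell/(np)^{\ell}$ form a geometric-type series in $\ell$, so $\sum_{\ell=3}^{L}\pr(X_\ell=0)=O((np)^{-3})=o(1)$, and the union bound over $[3,L]$ goes through.

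The long-cycle part has a real gap at exactly the point you flag as ``the delicate part''. Iterated P\'osa rotations in a good expander produce linear-size endpoint sets $A,B$ such that every pair $(a,b)\in A\times B$ is joined by a path of the \emph{same} length $|P|$; this is what gives $\Omega(n^2)$ boosters in the Hamiltonicity proof. It does \emph{not} give, for each fixed $\ell<|P|$, a quadratic number of pairs closing to a cycle of length exactly $\ell$. For a prescribed $\ell$ the natural candidates are the $\Theta(n)$ pairs at distance $\ell-1$ along a single long path, and as you observe this is too weak for a union bound over $\Theta(n)$ values of $\ell$ when $np_2$ is only tending to infinity. Upgrading from $\Theta(n)$ to $\Theta(n^2)$ closing pairs per length is precisely the nontrivial content here, and it does not fall out of the standard rotation machinery; {\L}uczak's actual argument proceeds differently (via structural properties of the giant component / $2$-core rather than sprinkling against a rotation grid). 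As written, your sketch asserts the crucial step without supplying a mechanism for it.
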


\begin{theorem}[Tree embeddings in expanders, a corollary of \cite{HAX} as given in \cite{BALETAL}]\label{thm:hax}
Let $N,\Delta $ be integers, and let $G$ be a graph. Assume that there exists an integer $k$ such that
\begin{enumerate}
\item For every $U\subseteq V(G)$ with $|U| \le k$ we have $|N_G(U)|\ge \Delta \cdot |U|+1$;
\item For every $U\subseteq V(G)$ with $k<|U| \le 2k$ we have $|N_G(U)|\ge \Delta \cdot |U|+N$.
\end{enumerate}
Then, for every $v\in V(G)$ and every rooted tree $T$ with at most $N$ vertices and maximum degree at most $\Delta$, the graph $G$ contains a copy of $T$ rooted in $v$.
\end{theorem}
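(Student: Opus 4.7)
My plan is to prove this via the Friedman--Pippenger / Haxell embedding technique, building a copy of $T$ in $G$ by greedily extending a partial embedding one vertex at a time. I would fix a BFS ordering $u_0 = \text{root}, u_1, \ldots, u_{|V(T)|-1}$ of $V(T)$, so each $u_i$ with $i \ge 1$ has its parent $p(u_i)$ among $\{u_0,\ldots,u_{i-1}\}$. I would initialise $\phi(u_0) = v$ and, at step $i \ge 1$, assign $\phi(u_i)$ to some $w \in N_G(\phi(p(u_i))) \setminus \phi(\{u_0,\ldots,u_{i-1}\})$, chosen carefully so as to preserve a Hall-type invariant described below.

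The core of the argument would be a Hall-type invariant maintained throughout the process. Let $S_i = \{u_0, \ldots, u_{i-1}\}$ denote the set of already-embedded tree vertices after $i$ steps, and let the frontier $F_i \subseteq S_i$ be the set of embedded vertices still having unembedded children in $T$. I would maintain the invariant that for every $A \subseteq F_i$,
\[
\bigl|N_G(\phi(A)) \setminus \phi(S_i)\bigr| \;\ge\; \sum_{u \in A}\bigl(\deg_T(u) - \deg_{T[S_i]}(u)\bigr),
\]
i.e., the unused neighbourhood of $\phi(A)$ is large enough to host all yet-to-be-embedded children of vertices in $A$. By Hall's theorem this guarantees a valid choice of $\phi(u_i)$ at each step; the stronger claim, proved via Haxell's reserve/rotation trick, is that at least one such choice actually preserves the invariant for the next step, keeping the process alive.

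To verify the invariant I would argue by contradiction: suppose some minimal $A \subseteq F_i$ violates it, and set $U = \phi(A)$. If $|U| \le k$, hypothesis (1) gives $|N_G(U)| \ge \Delta|U| + 1$; since the right-hand side of the Hall bound is at most $\Delta|A| = \Delta|U|$ (each embedded vertex has at most $\Delta$ unembedded children in $T$), the slack is enough to contradict failure after accounting for the vertices of $\phi(S_i)$ lying inside $N_G(U)$. If $k < |U| \le 2k$, hypothesis (2) provides $\ge \Delta|U| + N$ neighbours, and since $|\phi(S_i)| \le |V(T)| \le N$, subtracting all of $\phi(S_i)$ still leaves $\ge \Delta|U|$ free neighbours, again contradicting the failure of Hall.

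The main obstacle I expect is bounding $|U| = |\phi(A)| \le 2k$, since a priori $|\phi(A)|$ could reach $|V(T)| \le N$, which may exceed $2k$. I would handle this by the standard \emph{minimal tight violator} trick: a minimal violator of the Hall condition must be tight, so that any proper subset satisfies the inequality strictly; a counting argument, combined with hypothesis (2) applied to $2k$-sized subsets, forces any tight violator to have size at most $2k$. Once this size control is established, the two-case contradiction of the previous paragraph completes the verification, the greedy process successfully embeds all vertices of $T$, and the result is the desired rooted copy of $T$ in $G$ with $\phi(u_0)=v$.
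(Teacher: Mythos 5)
The paper does not prove Theorem~\ref{thm:hax} at all: it is imported as a black box from Haxell \cite{HAX}, in the form stated in \cite{BALETAL}, so there is no internal proof to compare against; your attempt therefore has to stand on its own, and as written it has genuine gaps at exactly the points where Haxell's argument is hard. First, your invariant quantifies only over subsets $A$ of the current frontier and measures $\bigl|N_G(\phi(A))\setminus\phi(S_i)\bigr|$. Such an invariant cannot be maintained from the stated hypotheses: when a newly embedded vertex $w$ enters the frontier, the invariant for $A=\{\phi^{-1}(w)\}$ requires $w$ to have enough unused neighbours, but nothing in the previous invariant (which never mentions vertices outside the image) controls $N_G(w)\setminus\phi(S_{i+1})$; condition (1) gives only $|N_G(w)|\ge\Delta+1$ in total, while the image may already contain up to $N-1$ vertices and can swallow all of these neighbours. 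The Friedman--Pippenger/Haxell invariant is stronger precisely for this reason: it is a residual-capacity inequality over \emph{all} subsets $U\subseteq V(G)$ with $|U|\le 2k$, and the entire content of the proof is the preservation step -- choosing the image of the new leaf so that no critical (tight) set is created, via an argument about maximal critical sets that uses the two-tier hypothesis. You name this step (``reserve/rotation trick'') but do not supply it.

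Second, your verification-by-contradiction does not close. In the case $|U|\le k$ the hypothesis provides one unit of slack, $|N_G(U)|\ge\Delta|U|+1$, whereas $\phi(S_i)\cap N_G(U)$ can have size up to $N-1\gg 1$; ``accounting for the vertices of $\phi(S_i)$ lying inside $N_G(U)$'' is exactly what the expansion hypothesis alone cannot do -- in the real argument this accounting is carried by the inductively maintained residual-degree invariant, not re-derived at the moment of violation. Your size-control step is likewise unsupported: sets of size strictly between $2k$ and $N$ satisfy no expansion hypothesis whatsoever (in the paper's application $N=\frac{1}{50}n$ while $2k=\beta n=o(n)$), so ``hypothesis (2) applied to $2k$-sized subsets'' cannot force a minimal tight violator to have size at most $2k$; in Haxell's proof sets larger than $2k$ are simply never considered, which is admissible only because of the stronger invariant and the $+N$ term for sets of size in $(k,2k]$. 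Since the theorem is used in this paper purely as an external tool, citing \cite{HAX} or \cite{BALETAL} is the intended route; a self-contained proof would require reproducing Haxell's critical-set argument in full, not the sketch above.
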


\begin{lemma}[Hamiltonicity and expansion of $G(n,p)$, see e.g. \cite{KRIVI}, Section 4]\label{thm:kriv}
Let $p=p(n)$ be such that $np-\log n-\log \log n \to \infty$, and let $G\sim G(n,p)$. Then, with high probability, there is a subset $S\subseteq V(G)$ of $\frac{n}{4}$ vertices, such that for every $s\in S$ there is a subset $T_s\subseteq V(G)$ of $\frac{n}{4}$ vertices, and for every $t\in T_s$ there is a Hamilton path between $s$ and $t$.
\end{lemma}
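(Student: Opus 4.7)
My plan is to combine standard expansion estimates for $G(n,p)$ with the P\'osa rotation--extension technique, following the presentation in Krivelevich's survey. The first step is to establish, by routine Chernoff plus union bound computations, that with high probability $G\sim G(n,p)$ satisfies: (a) every $U\subseteq V(G)$ with $|U|\le 2n/\log n$ has $|N_G(U)|\ge \frac{|U|\log n}{10}$; (b) every $U\subseteq V(G)$ with $|U|\ge 2n/\log n$ has $|U\cup N_G(U)|\ge 0.8n$; (c) $\delta(G)\ge 2$. The hypothesis $np-\log n-\log\log n\to\infty$ enters exactly in (c). Moreover, it is classical (and follows, for instance, from the Cooper--Frieze theorem quoted in the introduction) that under the same hypothesis $G$ is Hamiltonian with high probability.

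Next, I would fix a vertex $u\in V(G)$ and set
\[T_u:=\{v\in V(G)\setminus\{u\}\,:\,G\text{ has a Hamilton }u,v\text{-path}\}.\]
By Hamiltonicity, $T_u$ is nonempty (any neighbor of $u$ along a Hamilton cycle works). Starting from a Hamilton $u,v$-path and applying P\'osa rotations at the free endpoint, the resulting orbit of $v$ is contained in $T_u$; the P\'osa rotation lemma then yields
\[|T_u\cup N_G(T_u)|\le 3|T_u|,\]
by injectively mapping each $w\in N_G(T_u)\setminus T_u$ to its path-predecessor in $T_u$. I would then combine this with (a) and (b). If $|T_u|<2n/\log n$, then (a) gives $|N_G(T_u)|\ge \frac{|T_u|\log n}{10}$, contradicting the P\'osa bound $|N_G(T_u)|\le 2|T_u|$ for $n$ large. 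Hence $|T_u|\ge 2n/\log n$, and then (b) gives $|T_u\cup N_G(T_u)|\ge 0.8n$, so $|T_u|\ge 0.8n/3>n/4$. Thus $|T_u|\ge n/4$ for every $u\in V(G)$. Taking $S$ to be any $n/4$-subset of $V(G)$ and, for each $s\in S$, $T_s$ to be any $n/4$-subset of the corresponding endpoint set completes the proof.

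The main obstacle will be the rotation--extension machinery itself: rigorously establishing $|T_u\cup N_G(T_u)|\le 3|T_u|$ requires careful bookkeeping over the iterated P\'osa rotations, and the expansion thresholds in (a)/(b) are tuned so that the two regimes $|T_u|<2n/\log n$ and $|T_u|\ge 2n/\log n$ exactly cover all possibilities (in particular, (b) is essentially tight at the chosen density $p=(1+o(1))\log n/n$, which is the reason one must choose a threshold that is $\Theta(n/\log n)$ rather than, say, $\Theta(n)$). All of this is standard and can be found essentially verbatim in Section 4 of Krivelevich's survey cited in the statement.
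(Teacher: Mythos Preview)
The paper does not prove this lemma; it is quoted as an auxiliary result with a reference to Krivelevich's survey, so there is no in-paper argument to compare against. Your outline follows exactly the intended route---expansion properties of $G(n,p)$ plus P\'osa rotation--extension---and that is what the cited source does.

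There is, however, a real gap in property~(a). At $np=\log n+\log\log n+\omega(1)$ with $\omega(1)$ growing arbitrarily slowly, the graph w.h.p.\ contains many vertices of each degree $2,3,\ldots$, and indeed polynomially many vertices of degree below $\log n/20$; taking $U$ to be a single such vertex, or a collection of them, already violates $|N(U)|\ge |U|\log n/10$. So your remark that the hypothesis enters ``exactly in (c)'' is not right: it would be needed for (a) as well, and (a) as written simply fails in this regime. The argument in the cited survey does not claim uniform small-set expansion; instead it isolates the set of low-degree vertices, shows that this set is tiny and that no two of its members lie on a short path or near a short cycle, and establishes expansion only for sets interacting with it in a controlled way. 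The rotation argument is then run relative to this refined expansion. None of this is hard, but it is not a one-line Chernoff bound either, and it is where the actual content of the proof sits.

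A minor second point: the P\'osa inequality $|R\cup N(R)|\le 3|R|$ holds for a rotation orbit $R$ of a fixed initial Hamilton path, not for the full endpoint set $T_u$; your ``injective map to the path-predecessor'' needs a single underlying path to make sense. Since every such orbit $R$ is contained in $T_u$, it suffices to lower-bound $|R|$, so the fix is immediate---and you already flag that the bookkeeping for iterated rotations is the delicate part.
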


\section{The constructed pancyclic subgraph} \label{sec:outline}

We emulate (an approximation of) the construction in \cite{PANBOOK}.

\begin{defin}
Let $G$ be a graph and $H\subseteq G$ be a Hamilton cycle, and let $2\le \ell \le n-2$. We say that an edge $e\in E(G)$ is an $\ell$\emph{-shortcut} with respect to $H$ if (at least) one of the two intervals on $H$ that connects the two endpoints of $e$ has length $\ell +1$.
\end{defin}

The motivation behind this definition is that by using $H$ and an $\ell$-shortcut we can find a cycle of length $n-\ell$ in $G$, by replacing an interval of length $\ell +1$ with a single edge (the $\ell$-shortcut). In the construction described in \cite{PANBOOK}, one creates a sparse pancyclic graph by taking an $n$-cycle $H$ and $K$ shortcuts $e_0,e_1,...,e_K$, where $K$ is such that $\frac{1}{2}n \le 2^{K+1} +K-1 \le n$ and $e_i$ is a $2^i$-shortcut. Additionally, these shortcuts are consecutive on the cycle, so that $e_i,e_{i+1}$ and their corresponding intervals intersect in a vertex $v_i$. By taking intervals from the cycle $H$ and a subset of shortcuts, one can now encode a cycle of every length between $n$ and $n-2^{K+1}+1$. Next, by adding the edge between the first vertex of $e_0$ and the second vertex of $e_K$, all cycle lengths between $K+2$ and $2^{K+1}+K$ can be encoded. This leaves out only a subset of cycle lengths contained in $[5,K+1]$, and adding these lengths to the set of cycle lengths in the graph can be done by inserting $O(\log ^*n)$ additional edges. For the full details of the construction, we refer the reader to \cite{PANBOOK} Chapter 4.5.

We approximate this construction by finding a Hamilton cycle and shortcuts to encode an interval of $L =\Omega \left( \frac{n}{\sqrt{\log n}} \right)$ consecutive cycle lengths. Like in the deterministic version, we will utilize binary encoding of the cycle lengths, so that the number of required shortcuts is $(1+o(1))\log _2n$. Additionally, we will require the shortcuts to reside on a short interval of the cycle (where in the deterministic version they intersected each other in a vertex). Next, by adding certain edges to the subgraph we can add an interval of $L$ cycle lengths with each such added edge. If the said additional edges are chosen well (which we will show is possible to do with high probability), one can get a union of $O(\sqrt{\log n})$ of these intervals that covers all the lengths between some initial length $\ell ^*=(1+o(1)) \log _2n$ and $n$.

To handle cycle lengths shorter than $\ell ^*$ we will show that, with high probability, almost all of them (that is, all but $o(\log n)$ cycle lengths in $[3, \ell ^*]$) can be encoded by $o(\log n)$ carefully chosen shortcuts, this time utilizing an encoding in base $b=\lceil \log \log n \rceil$. The remaining unencoded cycle lengths, which constitute a subset of $[3,\ell ^*]$ of size $o(\log n)$, can now be added one-by-one by using at most $o(\log n)$ additional edges, with high probability.

Let
$$
p_1=p_5 = \frac{2\log \log n}{n},\ p_2=p_3 = \frac{50\log n}{n\cdot \log \log n},\  p_4=\frac{\log n + 10\sqrt{\log n}}{n},
$$
and let
$$
p^* =p^*(n)= 1-\prod _{i=1}^5(1-p_i).
$$
Letting $\varepsilon (n) \coloneqq \frac{n}{\log n}\cdot p^*-1$ we get that $\varepsilon (n) = O(\frac{1}{\log \log n})$, and since the property $\pex (G)\le k$ is monotone increasing, it suffices to prove that $\pex (G) \le (1+o(1))\log _2n$ holds with high probability for $G(n,p^*) \sim \bigcup _{i=1}^5G(n,p_i)$. We note that we did not attempt to optimize the value of $\varepsilon (n)$ determined by $p_1,...,p_5$, aiming rather for simplicity.

Denote
$$
\ell _i \coloneqq 2^i+1,
$$
and
$$
\beta = \beta (n) \coloneqq \frac{2(\log \log n) ^2}{\log n},\ d =d(n) = \lfloor \log _{(5\beta)^{-1}}(n/200) \rfloor .
$$
Note that
$$
d = \lfloor \log _{(5\beta)^{-1}}(n/200) \rfloor = (1+o(1))\cdot \frac{\log (n/200)}{-\log (5\beta )} = (1+o(1))\cdot \frac{\log n}{\log \log n}.
$$
For $1\le i \le 5$ let $G_i \sim G(n,p_i)$. We divide the construction into five steps, where in the $i$'th step we sample $G_i$ to try and produce a subgraph $H_i\subseteq \bigcup _{j=1}^iG_j$. If the construction is successful, the produced subgraph $H_5$ will be pancyclic with $|E(H_5)| = n+(1+o(1))\cdot \log_2n$. The steps of our construction are as follows.

\begin{enumerate}

\item Let
$$
K_0 \coloneqq \lfloor \log _2 \left( \frac{\log n}{6\log \log \log n} \right) \rfloor ,
$$
and
$$
b \coloneqq \lceil \log \log n \rceil,\ t\coloneqq \lceil \log _b \log n \rceil.
$$
Find a set of vertex disjoint cycles $C_0,...,C_{K_0},C_{\text{short}}$ in $G_1$ of respective lengths $\ell _0+1,\ell_1+1,...,\ell_{K_0}+1,t\cdot b+1$. The first $K_0+1$ cycles will later become the first $K_0+1$ shortcuts, and their corresponding intervals, where the edges of $C_{\text{short}}$ will become the shortcuts required to handle short cycles. For every $0\le i \le K_0$, choose an arbitrary edge $e_i\in C_i$ to serve as the shortcut.  Denote $H_1 = C_{\text{short}} \cup \bigcup _{i=0}^{K_0}C_i$.

\begin{figure}[h]
\centering
\begin{tikzpicture}[vertex/.style={draw,circle,color=black,fill=black,inner sep=1,minimum width=4pt},scale=1]
    
    \node[vertex] (a1) at (-0.3,0) {};
    \node[vertex] (a2) at (0,0.5) {};
    \node[vertex] (a3) at (0.3,0) {};
    \draw[thick] (a1) to (a2);
    \draw[thick] (a3) to (a2);
    \draw[thick] (a1) to (a3);
	\node at (0,-0.4) {$\ell _0=2$};
	
	\node[vertex] (b1) at (1.2,0) {};
    \node[vertex] (b2) at (1.2,0.6) {};
    \node[vertex] (b3) at (1.8,0.6) {};
    \node[vertex] (b4) at (1.8,0) {};
    \draw[thick] (b1) to (b2);
    \draw[thick] (b3) to (b2);
    \draw[thick] (b4) to (b3);
    \draw[thick] (b4) to (b1);
	\node at (1.5,-0.4) {$\ell _1 =3$};
	
	\node[vertex] (c1) at (2.9,0) {};
    \node[vertex] (c2) at (2.7,0.5) {};
    \node[vertex] (c3) at (2.9,1) {};
    \node[vertex] (c4) at (3.5,1) {};
    \node[vertex] (c5) at (3.7,0.5) {};
    \node[vertex] (c6) at (3.5,0) {};
    \draw[thick] (c1) to (c2);
    \draw[thick] (c3) to (c2);
    \draw[thick] (c4) to (c3);
    \draw[thick] (c4) to (c5);
    \draw[thick] (c6) to (c5);
    \draw[thick] (c6) to (c1);
	\node at (3.2,-0.4) {$\ell _2 =5$};
	
	\node at (4.5,0.3) {$\cdots$};
	
	\node[vertex] (d1) at (7,0.1) {};
    \node[vertex] (d2) at (7.6,0.1) {};
    \node[vertex] (d3) at (6.4,0.1) {};
    \node[vertex] (d4) at (8.1,0.2) {};
    \node[vertex] (d5) at (5.9,0.2) {};
    \node[vertex] (d6) at (8.5,0.5) {};
    \node[vertex] (d7) at (5.5,0.5) {};
    \node[vertex] (d8) at (8.8,1) {};
    \node[vertex] (d9) at (5.2,1) {};
    \draw[thick] (d1) to (d2);
    \draw[thick] (d3) to (d1);
    \draw[thick] (d4) to (d2);
    \draw[thick] (d3) to (d5);
    \draw[thick] (d6) to (d4);
    \draw[thick] (d5) to (d7);
    \draw[thick] (d6) to (d8);
    \draw[thick] (d9) to (d7);
    \draw[densely dotted,thick] (d8) to (8.9,1.5);
    \draw[densely dotted,thick] (d9) to (5.1,1.5);
	\node at (7,-0.4) {$\ell _{K_0} =\Theta \left( \frac{\log n}{\log \log \log n} \right)$};

	\node[vertex] (e1) at (10.5,0.1) {};
    \node[vertex] (e2) at (11.1,0.1) {};
    \node[vertex] (e3) at (10,0.4) {};
    \node[vertex] (e4) at (11.6,0.4) {};
    \node[vertex] (e5) at (9.7,0.9) {};
    \node[vertex] (e6) at (11.9,0.9) {};
    \draw[thick] (e5) to (e3) to (e1) to (e2) to (e4) to (e6);
    \draw[densely dotted,thick] (e5) to (9.7,1.5);
    \draw[densely dotted,thick] (e6) to (11.9,1.5);
	\node at (11,-0.4) {$t\cdot b =\Theta \left( \frac{\log \log n}{\log \log \log n} \right)$};

\end{tikzpicture}
\caption{Step 1, with resulting graph $H_1$ depicted.}\label{fig:stage1}
\end{figure}
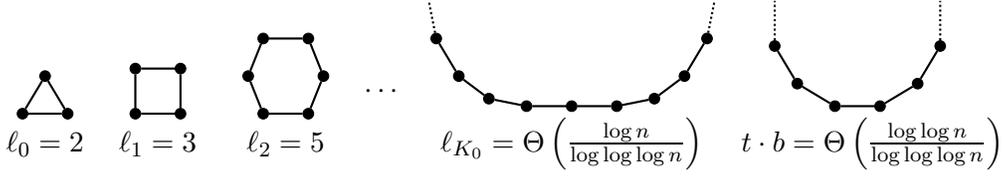

\item For every $0\le i \le K_0$, find a path of length $d+2$ in $G_2$ between the second vertex of $e_i$ and the first vertex of $e_{i+1}$ (where for $i=K_0$ the path is between $e_{K_0}$ and $e_0$), so that the $K_0+1$ paths are pairwise vertex disjoint from each other, and internally vertex disjoint from $V(H_1)$. Call the cycle formed by the union of the paths and the shortcuts $C^*$ and denote $\ell ^* \coloneqq e(C^*), H_2\coloneqq H_1 \cup C^*$. We have
$$
\ell ^* = (1+o(1))\cdot K_0 \cdot d=(1+o(1))\cdot \log _2n.
$$

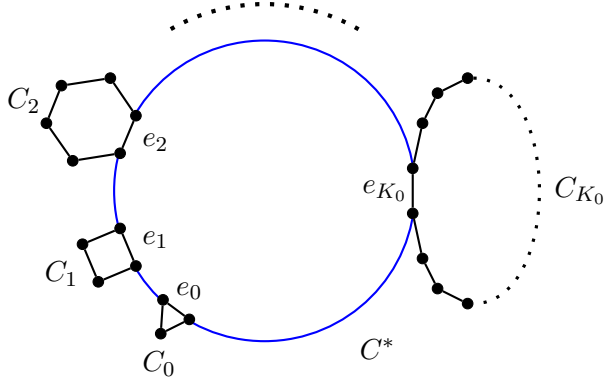
\begin{figure}[h]
\centering
\begin{tikzpicture}[vertex/.style={draw,circle,color=black,fill=black,inner sep=1,minimum width=4pt},scale=1]
    
    \draw[thick,blue] (0,0) circle (2);
    \node at (1.5,-2.1) {$C^*$};
	
	\fill[white] (-1.175,-1.58) circle (0.25);
	\node[vertex] (a1) at (-1,-1.71) {};
    \node[vertex] (a3) at (-1.35,-1.45) {};
    \node[vertex] (a2) at (-1.38,-1.9) {};
    \draw[thick] (a1) to (a2);
    \draw[thick] (a3) to (a2);
    \draw[thick] (a1) to (a3);
	\node at (-1.4,-2.3) {$C _0$};
	\node at (-1,-1.33) {$e _0$};
	
	\fill[white] (-1.81,-0.75) circle (0.25);
	\node[vertex] (b1) at (-1.71,-1) {};
    \node[vertex] (b2) at (-2.21,-1.21) {};
    \node[vertex] (b3) at (-2.42,-0.71) {};
    \node[vertex] (b4) at (-1.92,-0.5) {};
    \draw[thick] (b1) to (b2);
    \draw[thick] (b3) to (b2);
    \draw[thick] (b4) to (b3);
    \draw[thick] (b4) to (b1);
	\node at (-2.7,-1.1) {$C _1$};
	\node at (-1.45,-0.65) {$e _1$};
	
	\fill[white] (-1.81,0.75) circle (0.25);
	\node[vertex] (c1) at (-1.71,1) {};
    \node[vertex] (c2) at (-1.92,0.5) {};
    \node[vertex] (c5) at (-2.7,1.4) {};
    \node[vertex] (c4) at (-2.9,0.9) {};
    \node[vertex] (c3) at (-2.55,0.4) {};
    \node[vertex] (c6) at (-2.05,1.5) {};
    \draw[thick] (c1) to (c2);
    \draw[thick] (c3) to (c2);
    \draw[thick] (c4) to (c3);
    \draw[thick] (c4) to (c5);
    \draw[thick] (c6) to (c5);
    \draw[thick] (c6) to (c1);
	\node at (-3.2,1.2) {$C _2$};
	\node at (-1.45,0.65) {$e _2$};
	
	\draw[ultra thick,loosely dotted] (1.25,1.25*1.72) arc (60:120:2.5);
	
	\fill[white] (2,0) circle (0.3);
	\node[vertex] (d1) at (1.97,0.3) {};
    \node[vertex] (d2) at (2.1,0.9) {};
    \node[vertex] (d3) at (2.3,1.3) {};
    \node[vertex] (d4) at (2.7,1.5) {};
    
    \node[vertex] (d5) at (1.97,-0.3) {};
    \node[vertex] (d6) at (2.1,-0.9) {};
    \node[vertex] (d7) at (2.3,-1.3) {};
    \node[vertex] (d8) at (2.7,-1.5) {};
    
    \draw[thick] (d1) to (d2);
    \draw[thick] (d1) to (d5);
    \draw[thick] (d3) to (d2);
    \draw[thick] (d4) to (d3);
    \draw[thick] (d5) to (d6);
    \draw[thick] (d6) to (d7);
    \draw[thick] (d7) to (d8);
    \draw[very thick,loosely dotted,out=0,in=0] (d4) to (d8);
    
	\node at (1.6,0) {$e _{K_0}$};
	\node at (4.2,0) {$C _{K_0}$};	
	
\end{tikzpicture}
\caption{Step 2, with resulting graph $H_2$ depicted.}\label{fig:stage2}
\end{figure}

\item Let 
\begin{align*}
K & \coloneqq \lfloor \log _2 \left( \frac{n}{\sqrt{\log n}} \right) \rfloor ,\\
L & \coloneqq 2^{K+1}-1 ,
\end{align*}
so that $L+1\in \left[ \frac{n}{\sqrt{\log n}},\frac{2n}{\sqrt{\log n}} \right]$. For $K_0 < i \le K$, construct the $i$'th shortcut by choosing an arbitrary (non-shortcut) edge $e_i$ on $C^*$, and finding a path of length $\ell _i$ between its two vertices in $G_3$, such that these paths are internally vertex disjoint from each other and from $V(H_2)$. Letting $C_i$ denote the cycle comprised of $e_i$ and the $\ell_i$-path in $G_3$ between its vertices, we get that the subgraph $C^* \cup \bigcup _{i=0}^KC_i$ contains all cycle lengths in the interval $\left[ \ell ^*, \ell^*+L \right]$. Choose an arbitrary edge $e^* \in C^* \setminus \{ e_0 ,...,e_{K}\}$.

Next, denote $E (C_{\text{short}}) = \{e_{\text{short}}\} \cup \{ e_{i,j} \mid 0\le i \le t-1,0\le j \le b-1 \}$, with an arbitrary order. Find paths $\{ P_{i,j} \mid 0\le i \le t-1,0\le j \le b-1 \}$ in $G_3$, where $P_{i,j}$ connects the endpoints of $e_{i,j}$, such that the paths are all internally vertex disjoint from each other, and from $V(H_2\cup \bigcup _{i={K_0}}^KC_i)$, and $P_{i,j}$ has length $d+2+j\cdot b^i$. Now the subgraph $C_{\text{short}}\cup \{ P_{i,j} \mid 0\le i \le t-1,0\le j \le b-1 \}$ contains all cycle lengths in $\left [ (d+b+1)\cdot t+1, (d+b+1)\cdot t + b^t \right]$. Note that $b^t \ge \log n > \ell ^*$, and that $(d+b+1)\cdot t = O\left( \frac{\log n}{\log \log \log n} \right)$.

Finally for this step, connect one vertex of $e^*$ to one vertex of $e_{\text{short}}$ by a path $P^*$ of length $d+2$, internally disjoint from all previous construction, and denote $H_3 \coloneqq H_2 \cup P^* \cup \{ P_{i,j} \}_{i,j}  \cup \bigcup _{i={K_0}}^KC_i.$

\begin{figure}[h]
\centering
\begin{tikzpicture}[vertex/.style={draw,circle,color=black,fill=black,inner sep=1,minimum width=4pt},scale=1]
    
    \draw[thick] (0,0) circle (1.6);
    \node at (0,0) {$C^*$};
    \node at (-1.25,-0.1) {$e^*$};
    
    \node[vertex] (path1) at (180:1.6) {};
	
	\node[vertex] (a1) at (240:1.6) {};
    \node[vertex] (a2) at (230:1.6) {};   
    \draw[out=240,in=230] (a1) to (a2); 	
    
	\node[vertex] (b1) at (200:1.6) {};
    \node[vertex] (b2) at (190:1.6) {};  
    \draw[out=200,in=285] (b1) to (195:1.8)
    [out=105,in=190] (195:1.8) to (b2); 	
    
	\node[vertex] (c1) at (135:1.6) {};
    \node[vertex] (c2) at (125:1.6) {};   
    \draw[out=135,in=220] (c1) to (130:1.9)
    [out=40,in=125] (130:1.9) to (c2); 		
    
	\node[vertex] (d1) at (60:1.6) {};
    \node[vertex] (d2) at (50:1.6) {};   
	\draw[out=60,in=145] (d1) to (55:2.03)
    [out=325,in=50] (55:2.03) to (d2); 
    
    	\node[vertex] (e1) at (345:1.6) {};
    \node[vertex] (e2) at (335:1.6) {};   
	\draw[out=345,in=70] (e1) to (340:2.1)
    [out=250,in=335] (340:2.1) to (e2); 
    
	\node[vertex] (f1) at (220:1.6) {};
    \node[vertex] (f2) at (210:1.6) {};  
    \draw[out=220,in=305,blue] (f1) to (215:2.45)
    [out=125,in=210,blue] (215:2.45) to (f2);
    \draw[out=230,in=315,blue] (a2) to (225:2.6)
    [out=135,in=220,blue] (225:2.6) to (f1); 
    
    	\node[vertex] (g1) at (160:1.6) {};
    \node[vertex] (g2) at (150:1.6) {};  
    \draw[out=160,in=245,blue] (g1) to (155:2.35)
    [out=65,in=150,blue] (155:2.35) to (g2);	
    
    	\node[vertex] (h1) at (107:1.6) {};
    \node[vertex] (h2) at (97:1.6) {};  
    \draw[out=107,in=192,blue] (h1) to (102:2.25)
    [out=12,in=97,blue] (102:2.25) to (h2);	
    
    	\node[vertex] (i1) at (85:1.6) {};
    \node[vertex] (i2) at (75:1.6) {};  
    \draw[out=85,in=170,blue] (i1) to (80:2.5)
    [out=350,in=75,blue] (80:2.5) to (i2);
    \draw[out=92,in=181,blue] (h2) to (91:2.8)
    [out=1,in=88,blue] (91:2.8) to (i1);
    
    	\node[vertex] (j1) at (20:1.6) {};
    \node[vertex] (j2) at (10:1.6) {};  
    \draw[out=20,in=105,blue] (j1) to (15:2.35)
    [out=285,in=10,blue] (15:2.35) to (j2);	
	
    	\node[vertex] (k1) at (320:1.6) {};
    \node[vertex] (k2) at (310:1.6) {};  
    \draw[out=320,in=45,blue] (k1) to (315:2.25)
    [out=225,in=310,blue] (315:2.25) to (k2);
    
    	\node[vertex] (l1) at (290:1.6) {};
    \node[vertex] (l2) at (280:1.6) {};  
    \draw[out=290,in=15,blue] (l1) to (285:2.6)
    [out=195,in=280,blue] (285:2.6) to (l2);
    
    	\node[vertex] (m1) at (270:1.6) {};
    \node[vertex] (m2) at (260:1.6) {};  
    \draw[out=270,in=355,blue] (m1) to (265:2.4)
    [out=175,in=260,blue] (265:2.4) to (m2); 
    
    	\node[vertex] (n1) at (30:1.6) {};
    \node[vertex] (n2) at (20:1.6) {};  
    \draw[out=20,in=115,blue] (n1) to (25:2.8)
    [out=295,in=30,blue] (25:2.8) to (n2);

    \coordinate (mid) at (-7,0);
    \node at (mid) {$C_{\text{short}}$};
    \draw[thick] (mid) circle (0.7);
    \node at (-5.8,-0.3) {$e_{\text{short}}$};
    \node at (-4,0.7) {$P^*$};
    
    \node[vertex] (path1) at (180:1.6) {};
    \node[vertex] (path2) at ($ (mid) +(0:0.7)$ ) {};
    \draw[thick,blue,decorate,decoration={snake,segment length=41,amplitude=10}] (path1) to (path2); 	
    	
    	\node[vertex] (x1) at ($(mid)+(30:0.7)$) {};
    	\node[vertex] (x2) at ($(mid)+(60:0.7)$) {};
    	\node[vertex] (x3) at ($(mid)+(90:0.7)$) {};
    	\node[vertex] (x4) at ($(mid)+(120:0.7)$) {};
    	\node[vertex] (x5) at ($(mid)+(150:0.7)$) {};
    	\node[vertex] (x6) at ($(mid)+(180:0.7)$) {};
    	\node[vertex] (x7) at ($(mid)+(210:0.7)$) {};
    	\node[vertex] (x8) at ($(mid)+(240:0.7)$) {};
    	\node[vertex] (x9) at ($(mid)+(270:0.7)$) {};
    	\node[vertex] (x10) at ($(mid)+(300:0.7)$) {};
    	\node[vertex] (x11) at ($(mid)+(330:0.7)$) {};
    	
    \draw[out=45,in=0,blue] (path2) to (x1);	
    \draw[out=30,in=315,blue] (x1) to ($(mid)+(45:0.87)$)
    [out=135,in=60,blue] ($(mid)+(45:0.87)$) to (x2);	
    \draw[out=60,in=345,blue] (x2) to ($(mid)+(75:0.95)$)
    [out=165,in=90,blue] ($(mid)+(75:0.95)$) to (x3);
    
    \draw[out=135,in=90,blue] (x3) to (x4);	
    \draw[out=120,in=45,blue] (x4) to ($(mid)+(135:1)$)
    [out=225,in=150,blue] ($(mid)+(135:1)$) to (x5);	
    \draw[out=150,in=75,blue] (x5) to ($(mid)+(165:1.15)$)
    [out=255,in=180,blue] ($(mid)+(165:1.15)$) to (x6);
    	
	\draw[out=225,in=180,blue] (x6) to (x7);	
    \draw[out=210,in=135,blue] (x7) to ($(mid)+(225:1.18)$)
    [out=315,in=240,blue] ($(mid)+(225:1.18)$) to (x8);	
    \draw[out=240,in=165,blue] (x8) to ($(mid)+(255:1.35)$)
    [out=345,in=270,blue] ($(mid)+(255:1.35)$) to (x9); 
    
	\draw[out=315,in=270,blue] (x9) to (x10);	
    \draw[out=300,in=225,blue] (x10) to ($(mid)+(315:1.4)$)
    [out=45,in=330,blue] ($(mid)+(315:1.4)$) to (x11);   	
    	
%

\end{tikzpicture}

\caption{Step 3, with resulting graph $H_3$ depicted.}\label{fig:stage3}
\end{figure}
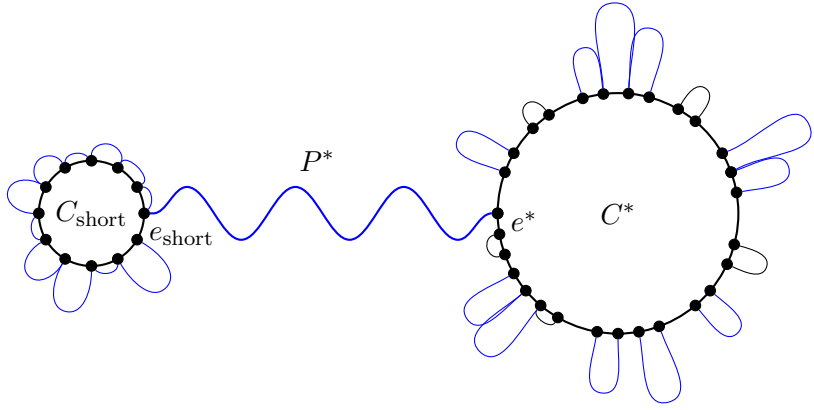

\item Construct a Hamilton cycle by connecting the vertex of $e^*$ and the vertex of $e_{\text{short}}$ that are not connected by $P^*$ by a path $P$ in $G_4$, whose internal vertices are exactly $V(G)\setminus V(H_3)$. Denote
$$
C_H \coloneqq H_3\cup P \setminus \big( \{e_0,...,e_K,e^*,e_{\text{short}}\}\cup \{ e_{i,j} \}_{i,j} \big).
$$
Then the constructed $H_4 \coloneqq H_3 \cup P$ contains the Hamilton cycle $C_H$ and $K+b\cdot t+3$ additional edges, and all cycle lengths in $\left[ (d+b+1)\cdot t+1 , \ell^*+L \right] \cup \left[ n-L,n \right]$.

\begin{figure}[h]
\centering
\begin{tikzpicture}[vertex/.style={draw,circle,color=black,fill=black,inner sep=1,minimum width=4pt},scale=1]
    
    \draw[thick,blue] (0,2.5) arc (90:240:2.5);
	\draw[thick] (0,2.5) arc (90:-120:2.5);
	
	\node[vertex] (x) at (0,2.5) {};
    \node[vertex] (y) at (330:2.5) {};
    \node[vertex] (z) at (290:2.5) {};
    \node[vertex] (w) at (240:2.5) {};
    
    \draw[out=270,in=150] (x) to (y);
    \draw[out=135,in=35] (z) to (w);
    
    \node at (-3,0) {$P$};
    \node at (0.3,0) {$e ^*$};
    \node at (1.2,0.7) {$C^*$};
    \node at (0,-1.6) {$e_{\text{short}}$};
    \node at (2,-2.1) {$P^*$};
    
    \draw[out=265,in=260] (85:2.5) to (80:2.5);	
    \draw[out=260,in=237] (80:2.5) to (67:2.5);
    
    \draw[out=240,in=231] (60:2.5) to (51:2.5);
    
    \draw[out=226,in=202] (46:2.5) to (22:2.5);
    \draw[out=202,in=197] (22:2.5) to (17:2.5);
    \draw[out=197,in=187] (17:2.5) to (7:2.5);
    
    \draw[out=180,in=172] (0:2.5) to (352:2.5);
    \draw[out=172,in=167] (352:2.5) to (347:2.5);
    
    \draw[out=162,in=152] (342:2.5) to (332:2.5);
    
    \draw[out=110,in=105] (290:2.5) to (285:2.5);
    \draw[out=105,in=98] (285:2.5) to (278:2.5);
    \draw[out=98,in=90] (278:2.5) to (270:2.5);
    \draw[out=90,in=80] (270:2.5) to (260:2.5);
    \draw[out=80,in=75] (260:2.5) to (255:2.5);
    \draw[out=75,in=65] (255:2.5) to (245:2.5);
    \draw[out=65,in=60] (245:2.5) to (240:2.5);


\end{tikzpicture}
\qquad \qquad
\begin{tikzpicture}[vertex/.style={draw,circle,color=black,fill=black,inner sep=1,minimum width=4pt},scale=1]
    
    \draw[out=110,in=105,lightgray] (290:2.5) to (285:2.5);
    \draw[out=105,in=98,lightgray] (285:2.5) to (278:2.5);
    \draw[out=98,in=90,lightgray] (278:2.5) to (270:2.5);
    \draw[out=90,in=80,lightgray] (270:2.5) to (260:2.5);
    \draw[out=80,in=75,lightgray] (260:2.5) to (255:2.5);
    \draw[out=75,in=65,lightgray] (255:2.5) to (245:2.5);
    \draw[out=65,in=60,lightgray] (245:2.5) to (240:2.5);
    
	\node[vertex,color=lightgray] (z) at (290:2.5) {};
    \node[vertex,color=lightgray] (w) at (240:2.5) {};    
    
    \draw[thick] (0,2.5) arc (90:240:2.5);
	\draw[thick] (0,2.5) arc (90:-120:2.5);
	
	\node[vertex] (x) at (0,2.5) {};
    \node[vertex] (y) at (330:2.5) {};

    \draw[out=270,in=150] (x) to (y);
    \draw[out=135,in=35,lightgray] (z) to (w);

    \draw[out=265,in=260] (85:2.5) to (80:2.5);	
    \draw[out=260,in=237] (80:2.5) to (67:2.5);
    
    \draw[out=240,in=231] (60:2.5) to (51:2.5);
    
    \draw[out=226,in=202] (46:2.5) to (22:2.5);
    \draw[out=202,in=197] (22:2.5) to (17:2.5);
    \draw[out=197,in=187] (17:2.5) to (7:2.5);
    
    \draw[out=180,in=172] (0:2.5) to (352:2.5);
    \draw[out=172,in=167] (352:2.5) to (347:2.5);
    
    \draw[out=162,in=152] (342:2.5) to (332:2.5);

	\node[vertex] (v1) at (-0.7,2.38) {};
    \node[vertex] (v2) at (2,-1.5) {};
	\draw[thick,blue,out=270,in=150] (v1) to (v2);
	\node at (1.5,-1.5) {$f_3$};
	
	\node[vertex] (u1) at (-1.1,2.25) {};
    \node[vertex] (u2) at (1.5,-2) {};
	\draw[thick,blue,out=270,in=150] (u1) to (u2);
	\node at (0.6,-1.8) {$f_{4}$};
	
	\node[vertex] (w1) at (-2.5,0) {};
    \node[vertex] (w2) at (-1.75,-1.8) {};
	\draw[thick,blue,out=310,in=100] (w1) to (w2);
	\node at (-1.5,-1.4) {$f_m$};
	
	\node[vertex] (z1) at (-2,1.47) {};
    \node[vertex] (z2) at (-2.1,-1.4) {};
	\draw[thick,blue,out=280,in=75] (z1) to (z2);
	\node at (-1.48,1) {$f_{m-1}$};
	
	\draw[ultra thick,loosely dotted] (-1.5,-0.5) to (-0.7,-0.25);
	
\end{tikzpicture}

\caption{Steps 4 and 5, with resulting graph $H_4$ (left) and $H_5$ (right) depicted.}\label{fig:stage4}
\end{figure}
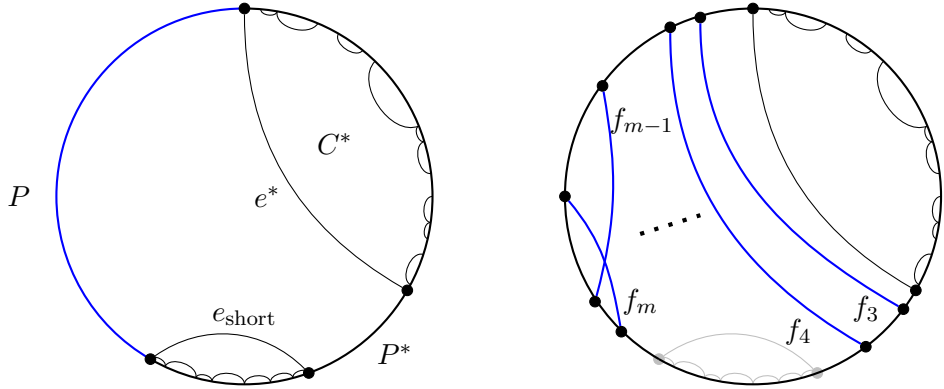

\item Let $m\coloneqq \lfloor n\cdot 2^{-K} \rfloor = o(\log n)$. For $3 \le i \le m$ find an $\ell _i ^*$-shortcut $f_i$ in $G_5$, where $\ell _i^*$ is an integer such that $\left| \ell _i^* -i\cdot 2^K \right| \le n^{0.9}$, and such that the $(\ell _i^*+1)$-path accompanying $f_i$ contains $V\left( C^* \cup \bigcup _{i=0}^KC_i \right)$. We now have that  $H_4 \cup \{f_i\}$ contains all cycle lengths in $\left[ \ell_i^*+2-L, \ell_i^*+2 \right]$. Since $\ell _i^* \ge \ell_{i+1}^*-L$ for all $i$, and $\ell ^*+L \ge \ell_3^*+2-L,\ \ell _m^* \ge n-L$, we get that $H_4\cup \{f_3,...,f_m\}$ contains all cycle lengths in $\left[ (d+b+1)\cdot t+1,n \right]$.

Finally, add the remaining at most $(d+b+1)\cdot t=o(\log n)$ cycle lengths by finding in $G_5$ an edge $g_{\ell}$ that constitutes an $(\ell-2)$-shortcut with respect to $C_H$, for every $\ell \in [3,(d+b+1)\cdot t]$.

This step adds at most $m+(d+b+1)\cdot t$ edges to the constructed subgraph $H_5 \coloneqq H_4 \cup \{f_3,...,f_m\} \cup \{g_3,...,g_{(d+b+1)t}\}$.
\end{enumerate}

Observe that the resulting subgraph $H_5$ is a union of the Hamilton cycle $C_H$ and an additional set of edges
$$
\{e_0,...,e_K,e^*,e_{\text{short}}\}  \cup \{ e_{i,j} \mid 0\le i \le t-1,0\le j \le b-1 \} \cup \{f_3,...,f_m\} \cup \{g_3,...,g_{(d+b+1)t}\} .
$$
Therefore $H_5$ contains at most
$$
n+K+b\cdot t+m+(d+b+1)\cdot t = n+(1+o(1))\cdot \log _2n
$$
edges. In Section \ref{sec:construction} we prove that the construction of $H_5$ we described is possible with high probability in $G(n,p^*)$. In Section \ref{sec:main} we prove that $H_5$, if it exists as a subgraph of $G$, is indeed pancyclic.

\section{Finding the subgraph in $G(n,p)$} \label{sec:construction}

We follow the steps described in Section \ref{sec:outline}, and show that, in each step, the desired substructure of the respective random graph $G_i$, $i=1,2,3,4,5$, exists with high probability.

We will denote the subgraph output by the $i$'th step of the construction (if successful) by $H_i$.

\subsection*{Step 1}

Recall the notation
$$
K_0 \coloneqq \lfloor \log _2 \left( \frac{\log n}{6\log \log \log n} \right) \rfloor,\ \ell_i \coloneqq 2^i+1
$$
and
$$
b \coloneqq \lceil \log \log n \rceil,\ t\coloneqq \lceil \log _b \log n \rceil.
$$

By Theorem \ref{thm:luczak}, with high probability, $G_1\sim G\left( n, p_1 \right)$ (where $p_1 =\frac{2\log \log n}{n}$) contains a sequence of cycles $C_0,C_1,...,C_{K_0},C_{\text{short}}$ of respective lengths $\ell _0+1,...,\ell_{K_0}+1,b\cdot t+1$.

The following lemma implies that these cycles are also typically vertex disjoint.

\begin{lem}
With high probability, no two cycles of length at most $\ell _{K_0}+1$ in $G_1$ intersect each other.
\end{lem}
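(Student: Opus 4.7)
My approach is a standard first moment calculation. First I would make the structural observation that if two distinct cycles $C_1, C_2$ of length at most $L := \ell_{K_0}+1$ share a vertex, then their union $H = C_1 \cup C_2$ is a connected subgraph on $v := |V(H)| \le 2L$ vertices whose cyclomatic number is at least $2$, so that $|E(H)| \ge v+1$. Therefore it suffices to show that with high probability $G_1$ contains no connected subgraph on $v \le 2L$ vertices with at least $v+1$ edges.

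Next I would bound the expected number of such subgraphs. The number of connected graphs on a fixed set of $v$ labelled vertices with exactly $v+1$ edges is at most $v^{v-2} \cdot \binom{\binom{v}{2}}{2} \le v^{v+2}/2$, obtained by choosing a spanning tree via Cayley's formula and then picking two additional edges. Hence the expected number of such subgraphs in $G_1$ is at most
$$\sum_{v=3}^{2L} \binom{n}{v} \cdot \frac{v^{v+2}}{2} \cdot p_1^{v+1} \le \sum_{v=3}^{2L} \frac{p_1\, v^{v+2}}{2} \cdot \frac{(np_1)^v}{v!} \le \sum_{v=3}^{2L} \frac{p_1\, v^{2}}{2} \cdot (e\,n\,p_1)^v.$$

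The key quantitative check is that $np_1 = 2\log\log n$ and $L \le \frac{\log n}{6\log\log\log n}+2$, so for every $v \le 2L$,
$$(2e\log\log n)^v = \exp\!\bigl(v\cdot(1+o(1))\log\log\log n\bigr) \le \exp\!\Bigl(\tfrac{1+o(1)}{3}\log n\Bigr) = n^{1/3+o(1)}.$$
Combined with the $p_1 v^2/2 = O(\log^2 n \cdot \log\log n / n)$ factor, each summand is $O(n^{-1/2})$, and since the number of summands is $O(L) = o(n^{1/2})$, the total expectation tends to $0$. Markov's inequality then yields the desired high-probability statement.

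The only step that requires attention is the verification that the range of $v$ is small enough for the term $(enp_1)^v$ not to overwhelm the $\binom{n}{v}/v! \approx (np_1)^v/v!$ saving; this is precisely why the construction fixes $K_0$ so that $\ell_{K_0}+1 = O(\log n / \log\log\log n)$, which is comfortably within the range where $(2e\log\log n)^{2L} = n^{o(1)}$. No subtle obstacle arises beyond bookkeeping of these logarithmic factors.
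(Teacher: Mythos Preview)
Your proof is correct and follows essentially the same approach as the paper: reduce to showing that $G_1$ has no subgraph on at most $2(\ell_{K_0}+1)$ vertices with more edges than vertices, then kill this by a first moment/union bound. The only cosmetic difference is in the counting: the paper bounds the number of $(v+1)$-edge graphs on $v$ labelled vertices by $\binom{\binom{v}{2}}{v+1}$ directly, whereas you go via Cayley's formula (spanning tree plus two extra edges); both lead to the same $(e^{O(1)}np_1)^v\cdot p_1\cdot \mathrm{poly}(v)$ estimate and the same endgame using $2L\cdot\log(np_1)=O\bigl(\tfrac{\log n}{\log\log\log n}\cdot\log\log\log n\bigr)=O(\log n)$ with room to spare.
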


\begin{proof}
Using the union bound we can show that, with high probability, $G_1$ does not contain a subgraph with at most $2\ell _{K_0}+1 \le \frac{\log n}{2\log \log \log n}$ vertices and more edges than vertices, which implies the lemma. Indeed, the probability that such a subgraph exists is at most
$$
\sum _{k=4}^{2\ell _{K_0}+1} \binom{n}{k}\cdot \binom{\binom{k}{2}}{k+1} \cdot {p_1}^{k+1} \le \sum _{k=4}^{2\ell _{K_0}+1} (e^2np_1)^k\cdot k \cdot p_1 \le \log ^2n \cdot (2e^2\log \log n)^{\frac{\log n}{2\log \log \log n}}\cdot p_1 = o(1).
$$
\end{proof}

\subsection*{Step 2}

Recall that $G_2 \sim G(n,p_2)$, where $p_2 = \frac{50\log n}{n\cdot \log \log n}$. For each $0\le i \le K_0$ let $\{s_i,t_i\}\coloneqq e_i\in E(C_i)$ be an arbitrary edge of $C_i$.

Recall that $\beta \coloneqq \frac{2(\log \log n) ^2}{\log n}$ and $d = \lfloor \log _{(5\beta)^{-1}}(n/200) \rfloor = (1+o(1))\cdot \frac{\log n}{\log \log n}$.

\begin{lem} \label{lemma:shortpaths}
With high probability $G_2$ contains paths $Q_0,...,Q_{K_0}$ such that
\begin{enumerate}
	\item $Q_i$ is a path between $t_i$ and $s_{i+1}$ for $0\le i \le K_0-1$, and $Q_{K_0}$ is between $t_{K_0}$ and $s_0$;
	\item $Q_0,...,Q_{K_0}$ all have length $d+2$;
	\item $Q_0,...,Q_{K_0}$ are vertex disjoint, and are internally vertex disjoint from $V(H_1)$.
\end{enumerate}
\end{lem}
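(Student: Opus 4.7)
The plan is to construct the paths $Q_0, Q_1, \ldots, Q_{K_0}$ one at a time by a BFS-expansion argument in $G_2$. The key observation is that $d$ was chosen so that, under expansion factor $(5\beta)^{-1}$ per layer, a ball of radius $d$ reaches size $\Theta(n)$; consequently a ball of radius $\lceil (d+1)/2 \rceil$ reaches size $\Theta(\sqrt{n})$, which drives the probabilistic closure step at the end.

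First I would establish the following expansion property for $G_2$: with high probability there exists $V^* \subseteq V(G_2)$ with $|V(G_2) \setminus V^*| = o(n)$ such that every $v \in V^*$ has at least $np_2/2$ neighbors in $V^*$, and every $U \subseteq V^*$ with $|U| \le n/100$ satisfies $|N_{G_2}(U) \cap V^*| \ge (5\beta)^{-1} \cdot |U|$. Such a set is obtained by discarding the (few) vertices violating either property; both bad events are rare by Chernoff bounds, and a union bound over the relevant $U$'s (which is standard since $(5\beta)^{-1} \cdot np_2 \gg \log n$) finishes the proof. Separately, since $|V(H_1)| = O(\log n / \log \log \log n) = o(\sqrt{n})$ and the endpoints $s_i, t_i$ are fixed before $G_2$ is sampled, a direct Chernoff plus union bound calculation shows that each $s_i, t_i$ has $\Theta(np_2)$ neighbors in $V^* \setminus V(H_1)$ with high probability.

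Next I would split $G_2 \sim G_2' \cup G_2''$ by sprinkling, with $G_2' \sim G(n, p_2')$ and $G_2'' \sim G(n, p_2'')$ independently, $(1-p_2')(1-p_2'') = 1-p_2$, $p_2' = (1-o(1)) p_2$, and $p_2'' = (\log^2 n)/n$, so that the expansion property transfers to $G_2'$. I then build the $Q_i$ iteratively. Given $Q_0, \ldots, Q_{i-1}$ with internal vertex set $U$, let $U^+ = U \cup (V(H_1) \setminus \{s_i, t_i, s_{i+1}, t_{i+1}\})$, and set $a = \lceil (d+1)/2 \rceil$, $b = \lfloor (d+1)/2 \rfloor$, so $a+b+1 = d+2$. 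I run BFS from $t_i$ in $G_2' - U^+$ for exactly $a$ layers; because $|U^+| + |V(G_2) \setminus V^*| = o(n) \ll n/100$, iterated expansion yields a layer-$a$ set $A_i$ of size $\Omega((5\beta)^{-a}) = \Omega(\sqrt{n})$. Repeating from $s_{i+1}$ in $G_2' - (U^+ \cup V(T^{(i)}_a))$ for $b$ layers, where $T^{(i)}_a$ is the BFS tree of depth $a$ rooted at $t_i$, gives $B_i$ of comparable size.

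Finally, since $G_2''$ is independent of $G_2'$ (and hence of $A_i, B_i$), the probability that $G_2''$ has no edge between $A_i$ and $B_i$ is at most $(1-p_2'')^{|A_i||B_i|} \le \exp(-\Omega(n \cdot p_2'')) = o(1/K_0)$. Any closure edge $\{u, v\}$ with $u \in A_i$, $v \in B_i$ concatenates with the two BFS arcs into a path of length exactly $a + 1 + b = d+2$ from $t_i$ to $s_{i+1}$, internally disjoint from $U^+$. A union bound over $i = 0, 1, \ldots, K_0$ completes the proof. The main technical obstacle is maintaining BFS expansion after iteratively deleting previously used vertices; this is accommodated by stating expansion with generous slack ($|U| \le n/100$), much larger than the number $O(K_0 \cdot d) = O(\log n)$ of internal vertices used by all previous paths combined.
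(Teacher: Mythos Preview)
Your approach via two-sided BFS and sprinkling is genuinely different from the paper's, which instead invokes Claim~\ref{clm:G2expander} to pass to a subset $U^*\subseteq V(G)\setminus(V(H_1)\cup\bigcup_{j<i}V(Q_j))$ on which $G_2$ is a $(\beta n,1/3\beta)$-expander, embeds a full $(1/5\beta)$-ary tree of depth $d$ rooted at a fresh $G_2$-neighbour of $s_{i+1}$ via Theorem~\ref{thm:hax}, and then closes with a fresh edge from $t_i$ to the $\Omega(\beta n)$ leaves. The key advantages of that route are that the tree embedding is deterministic once the expander is fixed, and that the only random edges ever revealed at stage $i$ are those incident to $s_{i+1}$ and $t_i$, which are untouched by earlier stages.

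Your write-up has a genuine gap precisely at the first BFS layer. You assert that each $t_i$ has $\Theta(np_2)$ neighbours in $V^*\setminus V(H_1)$, and that this survives deletion of $U^+$. But $|U^+\setminus V(H_1)|$ is the number of internal vertices already used by $Q_0,\dots,Q_{i-1}$, which is up to $K_0(d+1)=(1+o(1))\log_2 n$, whereas $np_2=50\log n/\log\log n$; the former is larger by a factor of order $\log\log n$. Since $U^+$ is determined by $G_2'$, nothing in your argument prevents $U^+$ from containing \emph{all} of $N_{G_2'}(t_i)\cap V^*$, so the BFS may die at depth~$1$. The same issue recurs, more severely, for $s_{i+1}$ once you have additionally deleted the first BFS tree of size $\Theta(\sqrt n)$. (Separately, the range $|U|\le n/100$ in your expansion hypothesis cannot hold with factor $(5\beta)^{-1}$, since that would force $|N(U)|\gg n$; you need $|U|\le c\beta n$.)

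The fix is essentially what the paper does implicitly: argue that no edge incident to $t_i$ or $s_{i+1}$ has been exposed in the construction of $Q_0,\dots,Q_{i-1}$ (they lie in $V(H_1)$, and all exposed edges had both endpoints outside $V(H_1)$ except for the two designated endpoints of each earlier stage), so that conditionally $t_i$ still has $\text{Bin}(|V^*\setminus U^+|,p_2')$ neighbours available. Without this exposure bookkeeping, the iterative construction as you wrote it does not go through.
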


Recall that $K= \lfloor \log _2 \left( \frac{n}{\sqrt{\log n}} \right) \rfloor$ and $L=2^{K+1}-1$. Before getting to the proof of Lemma \ref{lemma:shortpaths}, we show the following claim.

\begin{claim}\label{clm:G2expander}
With high probability, for every vertex subset $U\subseteq V(G)$ with $|U| \ge n-2L$, there is a vertex subset $U^* \subseteq U$, with $|U^*|\ge (1-\beta )\cdot n$, such that the induced subgraph $G_2[U^*]$ is a $\left( \beta n, 1/3\beta \right)$-expander.
\end{claim}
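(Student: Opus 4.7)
The plan is to define a small ``bad'' set $B\subseteq V(G_2)$ of atypical vertices in $G_2$ and take $U^*:=U\setminus B$. The argument has three components: bounding $|B|$, establishing a uniform lower bound on neighborhoods $|N_{G_2}(W)|$ for all small $W$ disjoint from $B$, and assembling.

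First, I would let $B:=\{v\in V(G_2):\deg_{G_2}(v)<np_2/2\}$ be the set of vertices of atypically small $G_2$-degree. A standard Chernoff bound gives $\pr\bigl(\deg_{G_2}(v)<np_2/2\bigr)\le e^{-np_2/8}=n^{-\Omega(1/\log\log n)}$, so $\E|B|=n^{1-\Omega(1/\log\log n)}$; by Markov $|B|=o(\beta n)$ with high probability. This is already enough to ensure $|U^*|\ge |U|-|B|\ge (1-\beta)n$ by absorbing the $2L$ loss from $V\setminus U$ together with $|B|$ into the $\beta n$ slack.

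The heart of the proof is the neighborhood estimate: w.h.p., every $W\subseteq V(G_2)\setminus B$ with $|W|\le \beta n$ satisfies
\[
|N_{G_2}(W)|\ge |W|/(3\beta)+|V\setminus U^*|,
\]
which, combined with $|N_{G_2[U^*]}(W)|\ge |N_{G_2}(W)|-|V\setminus U^*|$, immediately gives the expander property. I would prove this by splitting on the size $k:=|W|$. For $k$ above a suitable threshold (roughly $k\ge \beta n/\log\log n$), a direct first-moment union bound
\[
\pr\bigl(\exists W,\,|W|=k,\,|N_{G_2}(W)|\le m\bigr)\le \binom{n}{k}\binom{n-k}{m}(1-p_2)^{k(n-k-m)}
\]
with $m=|W|/(3\beta)+|V\setminus U^*|$ is summable to $o(1)$, because for such $k$ the exponential decay $e^{-k(n-k-m)p_2}$ dominates the binomial factor $n^{k+m}$.

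The subtle case is small $k$, where the union bound is too weak (since $np_2=\Theta(\log n/\log\log n)$ is below the natural threshold of order $\log n/(3\beta)$). Here I would combine two structural ingredients: by definition of $B$, every $v\in W$ has $\deg_{G_2}(v)\ge np_2/2$; and a first-moment argument analogous to the one in Step 1 shows that w.h.p.\ $G_2$ contains no subgraph on at most $\mathrm{polylog}(n)$ vertices with more edges than vertices. Together these force the open neighborhoods of distinct $v\in W$ to be nearly disjoint and nearly external to $W$, so $|N_{G_2}(W)|\ge (1-o(1))|W|\cdot np_2/2$, which comfortably exceeds $|W|/(3\beta)+|V\setminus U^*|$.

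The main obstacle is precisely this small-$k$ regime. Because $np_2$ is below the union-bound threshold, a naive concentration argument does not close, and one really needs to exploit both the degree lower bound inherited from removing $B$ and the absence of dense small subgraphs in $G_2$ to control the neighborhoods. Once these are in place, setting $U^*:=U\setminus B$ and verifying the two properties closes the claim.
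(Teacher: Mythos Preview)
Your approach has a genuine gap in the small-$k$ regime, and it is not the one you flagged. The additive term $|V\setminus U^*|$ that you subtract when passing from $|N_{G_2}(W)|$ to $|N_{G_2[U^*]}(W)|$ is of order $2L=\Theta(n/\sqrt{\log n})$, because $U$ can be as small as $n-2L$. This is enormously larger than the maximum degree of $G_2$, which is $\Theta(np_2)=\Theta(\log n/\log\log n)$. Concretely: pick any vertex $v\notin B$ and set $U:=V(G)\setminus N_{G_2}(v)$; then $|U|\ge n-\Delta(G_2)\ge n-2L$, yet $v\in U^*=U\setminus B$ has no neighbours at all in $U^*$. So $G_2[U^*]$ fails to be even a $(1,1)$-expander. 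Your inequality $|N_{G_2}(W)|\ge (1-o(1))|W|\cdot np_2/2$ for small $W$ is correct, but it does \emph{not} ``comfortably exceed'' $|W|/(3\beta)+|V\setminus U^*|$: for $|W|=1$ the left side is $\Theta(\log n/\log\log n)$ and the right side is $\Theta(n/\sqrt{\log n})$.

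The underlying issue is that $U^*$ cannot be chosen as $U$ minus a \emph{fixed} small set $B$; it must genuinely depend on $U$. The paper's argument does this: it first shows the single random-graph property that any two disjoint sets of size $\beta n$ are joined by an edge in $G_2$, and then, for a given $U$, iteratively peels off non-expanding subsets $W_i\subseteq U_i$ to form $U^*$. The key point is that the union of the peeled sets, if it ever reached size $\beta n$, would together with its non-neighbourhood produce two $\beta n$-sets with no edge between them, a contradiction. This peeling is exactly what absorbs vertices that the adversarial choice of $U$ has isolated, and there is no way to replace it by a global low-degree set when $2L\gg np_2$.
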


\begin{proof}
First, observe that, with high probability, for every $U,W\subseteq V(G)$ disjoint subsets with $|U|=|W|=\beta n$ there is an edge in $G_2$ between $U$ and $W$. Indeed, the probability that there are such subsets with no edge between them is at most
$$
\binom{n}{\beta n}^2 \cdot (1-p_2)^{\beta ^2 n^2} \le \left( \frac{e^2}{\beta ^2} \cdot \exp (-\beta np_2)\right) ^{\beta n} \le \left( \frac{\log ^2n}{\log \log n} \cdot \exp \left( -100\log \log n \right) \right) ^{\omega (1)} = o(1).
$$

Now, assume that $G_2$ has the aforementioned property. We reiterate an argument from \cite{FRIED} and show that, in this case, for every such $U$ there is $U^*\subseteq U$ with the desired properties.

For a given $U$, construct $U^*$ as follows. Set $U_0=U$. For $i\ge 0$, if  $|U_i| \ge (1-\beta )n$ and there is $W_i\subseteq U_i$ with $|W_i|\le \beta n$ and $|N_{G_2[U_i]}(W_i)|\le \frac{1}{3\beta}|W_i|$, set $U_{i+1}=U_i\setminus W_i$. Otherwise, terminate the process with $U^*=U_i$. 

Clearly, either the resulting $G_2[U^*]$ is a $\left( \beta n, 1/3\beta \right)$-expander, or $|U^*| < (1-\beta )\cdot n$. In fact, in the latter case, it must be that $(1-2\beta )\cdot n \le |U^*| < (1-\beta )\cdot n$, since at most $\beta n$ vertices are removed in every step of the process. Suppose that this is the case, and denote $W \coloneqq U \setminus U^*$. Then $|N_{G_2}(W)| \le \frac{1}{3\beta}\cdot |W| +|V(G)\setminus U|  \le \frac{2}{3}n$. We therefore have that $W$ and $V(G)\setminus N_{G_2}(W)$ are subsets of size at least $\beta n$ with no edges between them, a contradiction to our assumption.
\end{proof}

With Claim \ref{clm:G2expander} at hand we are now able to prove Lemma \ref{lemma:shortpaths} by appealing to Theorem \ref{thm:hax}.

\begin{proof}[Proof of Lemma \ref{lemma:shortpaths}]

Assume that $G_2$ has the property in the assertion of Claim \ref{clm:G2expander}, and suppose that $Q_0,...,Q_{i-1}$ have already been constructed. We attempt to construct $Q_i$.

Let $U\coloneqq V(G) \setminus \left( V(H_1) \cup \bigcup _{j=0}^{i-1}V(Q_j) \right)$, so that
$$
|U| \ge n-2b\cdot t-2^{K_0+1}-K_0 \cdot (d+2) \ge n-2L,
$$
and let $U^*\subseteq U$ be a subset of size at least $(1-\beta )\cdot n$ such that $G_2[U^*]$ is a $\left( \beta n, 1/3\beta \right)$-expander. Observe that $G_2[U^*]$ satisfies the conditions of Theorem \ref{thm:hax} for $\Delta =\frac{1}{4\beta},N=\frac{1}{50}n,k=\frac{1}{2}\beta n$.

Observe that, at this point, the edges of $G_2$ between $\{s_{i+1},t_i\}$ ($s_0$ in the case $i=K_0$) and $U^*$ have not been sampled yet. The probability that $s_{i+1}$ does not have a neighbour in $U^*$ is at most $(1-p_2)^{(1-\beta )n} =o({K_0}^{-1})$. Assume that there is such a neighbour, say $u$. By Theorem \ref{thm:hax}, $G_2[U^*]$ contains a complete $\frac{1}{5\beta}$-ary tree of depth $d$ rooted in $u$. This tree has at least $\frac{\beta}{40}\cdot n$ leaves. The probability that none of these leaves is a neighbour of $t_{i}$ in $G_2$ is at most
$$
(1-p_2)^{\beta n/40} \le \exp \left( -\frac{1}{40}\cdot \frac{50\log n}{n\cdot \log \log n} \cdot \frac{2(\log \log n)^2}{\log n}\cdot n \right) = o({K_0}^{-1}).
$$
Now, if indeed $t_{i}$ has a neighbour among the tree's leaves, say $w$, the path from $s_{i+1}$ to $u$, down the the tree to $w$, and from $w$ to $t_{i}$ is a path of length $d+2$ that intersects $V(C_{\text{short}})\cup \bigcup _{j=0}^{K_0}V(C_j) \cup \bigcup _{j=0}^{i-1}V(Q_j)$ only in $\{s_{i+1},t_{i} \}$.

Finally, for every $i$ we showed that the probability that such a path $Q_i$ does not exist is at most $o({K_0}^{-1})$, and therefore, by the union bound, a sequence $Q_0,...,Q_{K_0}$ as required exists with high probability.
\end{proof}

Now $\left( \bigcup _{i=0}^{K_0} Q_i \right) \cup \{e_0,...,e_{K_0}\}$ is a cycle, denote it by $C^*$. We have
$$
\ell ^* \coloneqq |C^*| = (K_0 +1)\cdot \left( d+3 \right) = (1+o(1))\cdot \log _2n.
$$

\subsection*{Step 3}

Recall that $G_3 \sim G(n,p_3)$, with $p_3 = \frac{50\log n}{n\cdot \log \log n}$. Let $e_{K_0+1},...,e_{K},e^*$ be distinct edges of $C^*\setminus \{e_0,...,e_{K_0}\}$, such that $e^*$ is vertex disjoint from $e_{K_0+1},...,e_K$, and denote $e_i=\{s_i,t_i\}$ such that $s_i$ is the predecessor of $t_i$ on $C^*$ for all $i$, according to an arbitrary orientation of $C^*$.

As a preparation for a proof that the construction in Step 3 is possible with high probability, observe that $G_3$ and $G_2$ are drawn from the same distribution, and therefore Claim \ref{clm:G2expander} also holds for $G_3$. That is, we have that, with high probability, for every $U\subseteq V(G)$ with $|U| \ge n-2L$, there is $U^* \subseteq U$, with $|U^*|\ge (1-\beta )\cdot n$, such that $G_3[U^*]$ is a $\left( \beta n, 1/3\beta \right)$-expander. In the proofs of the following two lemmas, we assume that indeed $G_3$ has this property.

\begin{lem}\label{lem:step3long}
With high probability $G_3$ contains paths $Q_{K_0+1},...,Q_{K}$ such that
\begin{enumerate}
	\item $Q_i$ is a path between $s_i$ and $t_i$ for $K_0+1\le i \le K$;
	\item $Q_i$ has length $\ell _i+1$ for $K_0+1\le i \le K$;
	\item $Q_{K_0+1},...,Q_{K}$ are internally vertex disjoint from each other and from $V(H_2)$.
\end{enumerate}
\end{lem}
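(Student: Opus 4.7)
The plan is to follow the inductive scheme of Lemma~\ref{lemma:shortpaths}, with a modified embedded tree that accounts for the much wider range of path lengths $\ell_i+1$ sought here. Since $G_3$ and $G_2$ have the same distribution, we assume that $G_3$ satisfies the conclusion of Claim~\ref{clm:G2expander}. Suppose $Q_{K_0+1},\ldots,Q_{i-1}$ have been constructed and we wish to construct $Q_i$. Set
\[ U := V(G) \setminus \Bigl( V(H_2) \cup \bigcup_{j=K_0+1}^{i-1} V(Q_j) \Bigr); \]
since $|V(H_2)|=O(\log n)$ and $\sum_{j\le K}(\ell_j+1)\le 2L$, we have $|U|\ge n-2L$, and Claim~\ref{clm:G2expander} supplies $U^*\subseteq U$ with $|U^*|\ge(1-\beta)n$ such that $G_3[U^*]$ is a $(\beta n, 1/(3\beta))$-expander. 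As in Lemma~\ref{lemma:shortpaths}, the edges of $G_3$ between $\{s_i,t_i\}$ and $U^*$ are essentially fresh with respect to the randomness used in earlier iterations: each vertex of $C^*$ is incident to at most two shortcut edges, so it participates as $s_j$ or $t_j$ in at most two previous iterations, and even then only $O(\beta n)$ edges from it have effectively been revealed.

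The key new ingredient is the shape of the tree we embed. Let $\Delta := 1/(5\beta)$ and $d := \lfloor \log_{1/(5\beta)}(n/200)\rfloor$ as before. Rooted at some $u\in N_{G_3}(s_i)\cap U^*$, I would embed a ``lollipop'' tree $T$ consisting of a trunk $u=v_0,v_1,\ldots,v_a$ of length $a := \ell_i-d-1$, followed at $v_a$ by a complete $\Delta$-ary tree of depth $d$. We have $a\ge 0$ throughout: the smallest relevant length is $\ell_{K_0+1}=2^{K_0+1}+1\approx \log n/(3\log\log\log n)$, which exceeds $d\approx \log n/\log\log n$ since $\log\log n\gg \log\log\log n$. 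The tree $T$ has maximum degree $\Delta+1\le 1/(3\beta)$ (attained at $v_a$), at most $a+2\Delta^d\le L+n/100\le n/50$ vertices, and at least $\Delta^d\ge n\beta/40$ leaves (using $(1/(5\beta))^{d+1}>n/200$). A short verification shows that with $\Delta+1$ in place of $\Delta$, $N=n/50$, $k=\beta n/2$, the graph $G_3[U^*]$ satisfies the hypotheses of Theorem~\ref{thm:hax}: indeed $|N_{G_3[U^*]}(W)|\ge |W|/(3\beta)=5\Delta|W|/3\ge(\Delta+1)|W|+1$ for $|W|\le k$, and for $k<|W|\le 2k=\beta n$ we have $(5\Delta/3-\Delta-1)|W|\ge (2\Delta/3-1)\cdot\beta n/2\ge n/15-\beta n/2 \ge n/50$ for large $n$.

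Finally, the probability that $s_i$ has no neighbour in $U^*$ is at most $(1-p_3)^{(1-\beta)n}=n^{-\omega(1)}$, and the probability that $t_i$ has no neighbour among the $\ge n\beta/40$ leaves of $T$ is at most $(1-p_3)^{n\beta/40}\le\exp(-p_3 n\beta/40)\le(\log n)^{-5/2}=o(1/\log n)$. When both events hold, concatenating $s_i\to u\to v_1\to\cdots\to v_a\to\cdots\to w\to t_i$ (where $w$ is a leaf-neighbour of $t_i$) gives a path of length $1+a+d+1=\ell_i+1$ with internal vertices in $U^*\subseteq U$, as required. A union bound over the $K-K_0=O(\log n)$ iterations completes the argument. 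The hardest part is calibrating the trunk length $a$ and subtree depth $d$ so that $T$ simultaneously fits within the $N=n/50$ budget of Theorem~\ref{thm:hax}, retains $\Omega(\beta n)$ leaves (enough to find a neighbour of $t_i$ whp), and has $a\ge 0$ for every $i\in(K_0,K]$; these three constraints are precisely what the choice of $K_0$ (ensuring $\ell_{K_0+1}>d$) and the choice of $d$ (ensuring $\Delta^d\in(n\beta/40,n/200]$) secure.
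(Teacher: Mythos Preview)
Your approach mirrors the paper's --- inductive construction, Claim~\ref{clm:G2expander} for the expander, Theorem~\ref{thm:hax} to embed a tree whose shape is tailored to the target length $\ell_i+1$ --- with one structural difference: you embed a ``lollipop'' (a path of length $\ell_i-d-1$ capped by a single $\Delta$-ary tree of depth $d$) rooted at a neighbour $u$ of $s_i$, whereas the paper embeds a ``dumbbell'' (two $\Delta$-ary trees of depth $d$ joined by a path of length $\ell_i-2d-1$) at an arbitrary root and then attaches \emph{both} $s_i$ and $t_i$ to leaves.

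That difference matters for the exposure bookkeeping, and your freshness claim is not correct as written. Since the edges $e_{K_0+1},\dots,e_K$ are only required to be distinct (and there are not enough edges on $C^*$ to take them pairwise vertex-disjoint), a vertex $v$ of $C^*$ may serve as $s_j$ in one iteration and as $t_i$ in a later one. In your scheme, when $v$ plays the role $s_j$ you test edges from $v$ to \emph{all} of $U^*_j$, i.e.\ up to $(1-\beta)n$ edges, not $O(\beta n)$. If later $v=t_i$, the leaf set $L_i$ (of size between $\beta n/40$ and $n/200$) may lie entirely inside $U^*_j$, so every edge from $t_i$ to $L_i$ has already been exposed and the bound $(1-p_3)^{n\beta/40}$ for the conditional failure probability is unjustified. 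The paper avoids this precisely via its symmetric tree: each endpoint is only ever tested against a deliberately chosen half of a leaf set of size $\Delta^d$, so at most $\tfrac12\Delta^d$ edges from $v$ are exposed per use, and any later leaf set of the same size $\Delta^d$ retains at least $\tfrac12\Delta^d\ge\beta n/80$ fresh candidates. Your argument is easily repaired in the same spirit: when seeking the neighbour $u$ of $s_i$, restrict the search to an arbitrary subset of $U^*$ of size $\tfrac12\Delta^d$; since $p_3\cdot\tfrac12\Delta^d\ge p_3\cdot\beta n/80=\tfrac54\log\log n$, the failure probability is still $(\log n)^{-5/4}=o(1/K)$. (A minor aside: $(1-p_3)^{(1-\beta)n}=n^{-\Theta(1/\log\log n)}$, not $n^{-\omega(1)}$, though it is indeed $o(1/K)$.)
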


\begin{proof}
Suppose that $Q_{K_0+1},...,Q_{i-1}$ were found, and attempt to construct $Q_i$.

Here, as in Lemma \ref{lemma:shortpaths}, we will appeal to Theorem \ref{thm:hax}.

Let $U\coloneqq V(G) \setminus \left( V(H_2) \cup \bigcup _{j=0}^{i-1}V(Q_j) \right)$ and observe that
$$
|U| \ge n-2^{K+1}-K_0 \cdot (d+2) \ge n-2L.
$$
Let $U^*\subseteq U$ be a subset with at least $(1-\beta )\cdot n$ vertices such that $G_3[U^*]$ is a $\left( \beta n, 1/3\beta \right)$-expander.

As in the proof of Lemma \ref{lemma:shortpaths}, $G_3[U^*]$ satisfies the conditions of Theorem \ref{thm:hax} for the same parameters $\Delta =\frac{1}{4\beta},N=\frac{1}{50}n,k=\frac{1}{2}\beta n$. Recall that $d = \lfloor \log _{(5\beta)^{-1}}(n/200) \rfloor $, and let $T$ be the tree consisting of two complete $\frac{1}{5\beta}$-ary trees of depth $d$, whose roots are connected by a path of length $\ell _i-2d-1$ (which is positive for $i>K_0$). By Theorem \ref{thm:hax}, $U^*$ contains a copy of $T$ (rooted at an arbitrary vertex).

Let $L_{s_i},L_{t_i}\subseteq U^*$ be the sets of leaves of the embedding of $T$ that correspond to the first and the second subtrees of $T$ that are connected by a path. By the definition of $T$ we have that $|L_{s_i}|=|L_{t_i}|\ge \frac{\beta}{40}n$. Observe that $s_i$ and $t_i$ each belong to at most one other path among $Q_{K_0+1},...,Q_{i-1}$. For $v\in \{s_i,t_i\}$ do the following. If $v\notin V(Q_j)$ for $K_0<j\le i-1$, then choose an arbitrary subset of $L_v$ of size $\frac{1}{2}|L_v|$ and connect $v$ to one of the vertices in the subset by an edge from $E(G_3)$, if there is a neighbour of $v$ in the subset. If $v\in V(Q_j)$ for some $K_0<j\le i-1$, connect $v$ to a vertex of $L_v$ by a previously unexposed edge from $E(G_3)$, if such an edge exists. In both cases, at least $\frac{1}{2}|L_v| \ge \frac{\beta}{80}n$ edges are considered. Therefore, the probability that there is no edge between $v$ and (the subset of) $L_v$ is at most
$$
(1-p_3)^{\beta n/80} \le \exp \left( -\frac{1}{80}\cdot \frac{50\log n}{n\cdot \log \log n} \cdot \frac{2(\log \log n)^2}{\log n}\cdot n \right) = \exp\left( -\frac{5}{4}\log \log n \right) = o({K}^{-1}).
$$
In the case that an edge is found, denote it by $e_v$.

Now, $e_{s_i},e_{t_i}$ along with the path of length $\ell _i-1$ in $T$ between the two leaves connected to $s_i$ and $t_i$ constitute a path between $s_i$ and $t_i$ of length $\ell_i+1$, which is internally contained in $U$, and therefore, by the definition of $U$, is internally vertex disjoint from $V(H_2),Q_{K_0},...,Q_{i-1}$. Call this path $Q_i$.

The probability that there is $K_0+1\le i \le K$ for which we did not manage to find a path $Q_i$ in this way is at most the probability that $G_3$ does not have the property in the assertion of Claim \ref{clm:G2expander}, or $s_i$ or $t_i$ did not have a leaf neighbour in the embedding of $T$ for some $i$, both of which are of order $o(1)$.
\end{proof}

For $K_0+1\le i \le K$, denote by $C_i$ the cycle $Q_i \cup \{e_i\}$.

Let $v_1,...,v_{bt+1}$ be the vertices of $C_{\text{short}}$ according to their order on the cycle, let $\sigma :  \{0,1,...,t-1\} \times \{0,1,...,b-1\}\to [tb]$ be a bijection and denote $e_{i,j} = \{v_{\sigma (i,j)},v_{\sigma (i,j) +1}\}$ and $e_{\text{short}}=\{v_1,v_{bt+1}\}$.


\begin{lem}\label{lem:step3short}
With high probability $G_3$ contains paths $\{P_{i,j} \mid 0\le i\le t-1,0\le j\le b-1\}$ such that
\begin{enumerate}
	\item $P_{i,j}$ is a path between $v_{\sigma (i,j)}$ and $v_{\sigma (i,j)+1}$, for all $i$ and $j$;
	\item $P_{i,j}$ has length $d+2+j\cdot b^i$, for all $i$ and $j$;
	\item $\{P_{i,j} \mid 0\le i\le t-1,0\le j\le b-1\}$ are internally vertex disjoint from each other and from $V(H_2)\cup \bigcup _{i=K_0+1}^KV(C_i)$.
\end{enumerate}
\end{lem}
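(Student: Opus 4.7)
The plan is to mirror the proof of Lemma \ref{lem:step3long}, constructing the paths $P_{i,j}$ one at a time in some fixed order. Since $G_3$ and $G_2$ are identically distributed, Claim \ref{clm:G2expander} applies to $G_3$ with high probability, and I will condition on its conclusion. At step $(i,j)$, let $U$ be the set of vertices not used in $V(H_2) \cup \bigcup_{i=K_0+1}^K V(C_i)$ nor in any previously built $P_{i',j'}$. Because the total length of the paths $P_{i,j}$ is $\sum_{i,j}(d+2+j\cdot b^i) = O(\log n \cdot \log\log n) = o(L)$, we still have $|U| \ge n-2L$, so Claim \ref{clm:G2expander} yields $U^*\subseteq U$ with $|U^*|\ge (1-\beta)n$ such that $G_3[U^*]$ is a $(\beta n, 1/(3\beta))$-expander, which satisfies the hypotheses of Theorem \ref{thm:hax} with the same parameters $\Delta=1/(4\beta)$, $N=n/50$, $k=\beta n/2$ as in the previous lemma.

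The main obstacle, and the point of departure from Lemma \ref{lem:step3long}, is that the required path lengths $d+2+j\cdot b^i$ drop all the way down to $d+2$ when $j=0$, so the ``two depth-$d$ trees joined by a middle path'' scheme of the previous lemma is not applicable uniformly: for the shortest paths the middle path would have negative length, while using shallower trees would leave too few leaves to guarantee the subsequent connecting edges. To resolve this I will embed, for every pair $(i,j)$, a single tree $T_{i,j}$ consisting of a stem path of length $j\cdot b^i$, at whose far end is attached a complete $(1/(5\beta))$-ary tree of depth $d$. The tree $T_{i,j}$ has total size at most $(1+o(1))\cdot n/200 + j\cdot b^i + 1 \le n/50$ (using $(1/(5\beta))^d \le n/200$ and $j\cdot b^i \le b^t \le b\log n$) and maximum degree at most $1+1/(5\beta)\le 1/(4\beta)$, so by Theorem \ref{thm:hax} it embeds in $G_3[U^*]$ rooted at any prescribed vertex, yielding $(1/(5\beta))^d \ge \beta n/40$ leaves, each at distance exactly $j\cdot b^i+d$ from the root.

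The path $P_{i,j}$ of length $d+2+j\cdot b^i$ is then assembled by revealing in $G_3$ an edge from $v_{\sigma(i,j)}$ to some $u\in U^*$, embedding $T_{i,j}$ rooted at $u$, and revealing an edge from $v_{\sigma(i,j)+1}$ to one of the leaves of $T_{i,j}$; concatenating these with the unique path in $T_{i,j}$ from $u$ to that leaf yields a path of length $1 + (j\cdot b^i + d) + 1 = d+2+j\cdot b^i$, internally contained in $U^*$. Since every vertex of $C_{\text{short}}$ is incident to exactly two edges of $C_{\text{short}}$, it is an endpoint of at most two of the paths $P_{i,j}$, so the bookkeeping of previously exposed edges is identical to that in Lemma \ref{lem:step3long}: at each step at least $\beta n/80$ candidate edges remain unexposed, and the failure probability is at most $(1-p_3)^{\beta n/80}=o((bt)^{-1})$. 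A union bound over all $bt=o(\log n)$ paths completes the proof.
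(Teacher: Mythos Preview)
Your proof is correct and takes essentially the same approach as the paper's: both invoke Claim~\ref{clm:G2expander} and Theorem~\ref{thm:hax} to embed, for each $(i,j)$, a stem of length $j\cdot b^i$ capped by a complete $(1/(5\beta))$-ary tree of depth $d$ inside the expander $U^*$, and then attach the two endpoints $v_{\sigma(i,j)}$ and $v_{\sigma(i,j)+1}$ by single edges to the stem-end and to a leaf, respectively. The only cosmetic difference is in the edge-exposure bookkeeping---the paper restricts the root-side exposure to the first $n/\log\log n$ vertices of $U^*$ rather than appealing to the halving device of Lemma~\ref{lem:step3long}---but the structure and estimates are otherwise identical.
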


\begin{proof}
The proof follows similar steps to the proofs of Lemma \ref{lemma:shortpaths} and Lemma \ref{lem:step3long} by appealing to Theorem \ref{thm:hax}. Assume that $P_{\sigma ^{-1}(1)},...,P_{\sigma ^{-1}(k-1)}$ have already been found, and let $(i,j) = \sigma ^{-1}(k)$. Let $U^* \subseteq U\coloneqq V(G) \setminus \left( V(H_2) \cup \bigcup _{r=K_0+1}^{K}V(C_r) \cup \bigcup _{r=1}^{k-1} V(P_{\sigma ^{-1}(r)})\right)$ be such that $|U^*| \ge (1-\beta )\cdot n$ and $G_3[U^*]$ is a $\left( \beta n, 1/3\beta \right)$-expander.

Let $s_{k+1}$ be a neighbour of $v_{k+1}$ from among the first $\frac{n}{\log \log n}$ vertices of $U^*$. The edges between $v_{k+1}$ and $U^*$ in $G_3$ have not been sampled yet, and the probability that no such neighbour exists is at most $(1-p_3)^{\frac{n}{\log \log n}}=o(1/tb)$.

As in the previous proofs, by Claim \ref{clm:G2expander} and Theorem \ref{thm:hax} we have that $G_3[U^*]$ contains a tree which consists of a complete $\frac{1}{5\beta}$-ary tree of depth $d$ with a path of length $j\cdot b^i$ (this can possibly be 0, in which case the path is just a vertex) attached to its root, and such that the other end of the path is $s_{k+1}$. Let $L_k$ be the set of leaves in the tree, so that $|L_k| \ge \frac{\beta}{40}n$. At most $\frac{n}{\log \log n}$ of the leaves were considered as neighbours of $v_{k}$ in previous steps, and therefore the probability that $v_k$ does not have a neighbour $t_k$ from among the remaining leaves is at most $(1-p_3)^{\frac{\beta}{50}n}=o(1/tb)$. Now the path from $v_{k+1}$, through $s_{k+1}$, along the $(jb^i)$-path, down the tree to $t_k$ and then to $v_k$, satisfies all the requirements to be $P_{i,j}$.

The probability that for some $k$ one of the vertices $s_{k+1},t_k$ was not found is of order $o(1)$, and therefore, with high probability, this construction ends successfully.
\end{proof}

We remain with finding a path between $e_{\text{short}}$ and $e^*$, which is done in the following lemma.

\begin{lem}\label{lem:step3path}
With high probability $G_3$ contains a path $P^*$ of length $d+2$ between a vertex of $e_{\text{short}}$ and a vertex of $e^*$, which is internally disjoint from $V(H_2) \cup \bigcup _{i=K_0+1}^{K}V(C_i) \cup \bigcup _{i,j} V(P_{i,j})$.
\end{lem}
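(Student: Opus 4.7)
The proof imitates Lemmas \ref{lemma:shortpaths}, \ref{lem:step3long}, and \ref{lem:step3short}. Since $G_3$ and $G_2$ are identically distributed, Claim \ref{clm:G2expander} applies verbatim to $G_3$. Let $U \coloneqq V(G) \setminus \left( V(H_2) \cup \bigcup_{i=K_0+1}^K V(C_i) \cup \bigcup_{i,j} V(P_{i,j}) \right)$; the total count of used vertices is dominated by $\sum_{i=K_0+1}^K |V(C_i)| \le 2^{K+1} = L+1$, with all other contributions of lower order, so $|U| \ge n - 2L$, and Claim \ref{clm:G2expander} supplies $U^* \subseteq U$ with $|U^*| \ge (1-\beta)n$ such that $G_3[U^*]$ is a $(\beta n, 1/(3\beta))$-expander.

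Next, pick an endpoint $s$ of $e^*$ and an endpoint $t$ of $e_{\text{short}}$. Because $e^* \in E(C^*) \setminus \{e_0, \dots, e_K\}$ and $C^*$ is vertex-disjoint from $C_{\text{short}}$, the vertex $s$ lies in neither $\{s_i, t_i : K_0 < i \le K\}$ nor $\{v_k : 1 \le k \le tb+1\}$---the two vertex sets whose $G_3$-edges were probed in Lemmas \ref{lem:step3long} and \ref{lem:step3short}. Consequently every $G_3$-edge incident to $s$ is still unexposed, and $s$ has a neighbor $u \in U^*$ with probability at least $1 - (1-p_3)^{(1-\beta)n} = 1 - o(1)$. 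Applying Theorem \ref{thm:hax} to $G_3[U^*]$ with the same parameters $\Delta = 1/(4\beta)$, $N = n/50$, $k = \beta n/2$ used earlier in Step 3 embeds a complete $(1/(5\beta))$-ary tree $T$ of depth $d$ rooted at $u$, producing at least $\beta n/40$ leaves.

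The endpoint $t$ is either $v_1$ or $v_{tb+1}$, so the only previously-exposed $G_3$-edges at $t$ come from the single iteration of Lemma \ref{lem:step3short} in which $t$ appeared as $v_k$ or $v_{k+1}$; these queries touch at most $n/\log\log n$ vertices of $V(G)$. By the very same fresh-leaves arithmetic as in that lemma, at least $\beta n/50$ leaves of $T$ retain an unexposed $G_3$-edge to $t$, so with probability $1 - (1-p_3)^{\beta n/50} = 1 - o(1)$ some leaf $w$ is a neighbor of $t$. Concatenating $su$, the unique $u$-to-$w$ path in $T$, and $wt$ then yields the required $P^*$---a path of length $d + 2$ whose internal vertices lie in $U^* \subseteq U$ and hence are disjoint from $V(H_3)$. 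The principal (and essentially only) obstacle is the bookkeeping verifying that $s$ and $t$ each have enough unexposed $G_3$-edges, which is immediate from the choices of $e^*$ and $e_{\text{short}}$ together with the structure of the earlier exposures in Step 3.
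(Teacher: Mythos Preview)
Your argument follows the same template as the paper's---apply Claim~\ref{clm:G2expander} to get an expander on $U^*$, embed a complete $(1/5\beta)$-ary tree of depth $d$ via Theorem~\ref{thm:hax}, and attach the two designated endpoints at the root side and the leaf side---but you reverse the orientation: you grow the tree from a neighbour of the $e^*$ vertex and then try to connect an endpoint $t$ of $e_{\text{short}}$ to a leaf, whereas the paper grows the tree from a neighbour of $v_{bt+1}$ and connects a vertex of $e^*$ to a leaf.

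This reversal creates a real bookkeeping problem at the leaf step. First, the bound ``at most $n/\log\log n$ previously exposed vertices'' only holds for $t=v_{bt+1}$ (which played the role of $v_{k+1}$ once); for $t=v_1$ the step-$1$ exposure of Lemma~\ref{lem:step3short} queried an entire leaf set $L_1$, which can be as large as $n/200$. Second, and more seriously, even for $t=v_{bt+1}$ the arithmetic you invoke does not give $\beta n/50$ fresh leaves: the tree $T$ has only at least $\beta n/40=\Theta\!\left(\tfrac{(\log\log n)^2}{\log n}\,n\right)$ leaves, while the set of vertices already probed against $t$ has size $n/\log\log n$, which is asymptotically much larger. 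Since you have no control over where Theorem~\ref{thm:hax} places the leaves of $T$ inside $U^*$, nothing prevents all of them from lying in that previously-exposed set, and then no fresh edge to $t$ is available.

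The paper's orientation sidesteps this entirely: it puts the vertex with partially exposed edges ($v_{bt+1}$) at the \emph{root} side, where one only needs a single neighbour among the $\ge n/2$ fresh vertices of $U^*$, and puts the completely fresh vertex (an endpoint of $e^*$, none of whose $G_3$-edges have been sampled) at the \emph{leaf} side, so that all $\ge \beta n/40$ leaves are automatically fresh. Swapping your roles of $s$ and $t$ to match this orientation fixes the gap with no further change.
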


\begin{proof}
As in previous constructions in this step, let $U=V(G)\setminus \left( V(H_2) \cup \bigcup _{i=K_0+1}^{K}V(C_i) \cup \bigcup _{i,j} V(P_{i,j}) \right)$ and let $U^*$ be a large subset spanning an expander. At least $\frac{1}{2}n$ vertices of $U^*$ have not yet been considered as neighbours of $v_{bt+1}$, and with high probability at least one of them is, denote it by $s$. By Claim \ref{clm:G2expander} and Theorem \ref{thm:hax}, $G_3[U^*]$ contains a complete $\frac{1}{5\beta}$-ary tree of depth $d$ rooted in $s$. As none of the edges in $G_3$ of the vertices of $e^*$ have been sampled yet, with high probability there is an edge between one of them and the tree's at least $\frac{\beta}{40}n$ leaves, which together with the path from the leaf to $s$ and with $\{s,v_{bt+1}\}$ forms a path $P^*$ satisfying the conditions.
\end{proof}

\subsection*{Step 4}

Denote $X = V(G) \setminus V(H_3)$, and let $s_H\in e^*,t_H\in e_{\text{short}}$ be the vertices of $e^*,e_{\text{short}}$ not already connected by $P^*$.

\begin{claim}
With high probability there is a path $P$ in $G_4$ between $s_H$ and $t_H$, whose vertex set is $V(P)=X\cup \{s_H,t_H\}$.
\end{claim}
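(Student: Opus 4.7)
The plan is to reduce the claim to an application of Lemma~\ref{thm:kriv} on the random graph $G_4[X]$, followed by a short argument that uses the (not yet exposed) edges between $\{s_H, t_H\}$ and $X$ to attach the endpoints.

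First, I would note that $V(H_3)$ is dominated in size by the long paths $Q_{K_0+1}, \dots, Q_K$ built in Step~3, giving $|V(H_3)| = O(L) = O(n/\sqrt{\log n}) = o(n)$; in particular $|X| = (1-o(1))n$. Because edges of $G_4$ are independent, $G_4[X]$ is distributed as $G(|X|, p_4)$. The deliberately chosen surplus $10\sqrt{\log n}/n$ in $p_4$ is precisely what ensures the Hamiltonicity threshold condition survives passage to $X$: a short computation gives $|X| p_4 - \log|X| - \log\log|X| \ge (2-o(1))\sqrt{\log n} - \log\log n \to \infty$, so the hypothesis of Lemma~\ref{thm:kriv} holds for $G_4[X]$.

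The main step is then to apply Lemma~\ref{thm:kriv} to $G_4[X]$, producing with high probability a subset $S \subseteq X$ with $|S| \ge |X|/4$ such that for every $s \in S$ there is $T_s \subseteq X$ with $|T_s| \ge |X|/4$, and for every $t \in T_s$ there is a Hamilton path of $G_4[X]$ between $s$ and $t$. I would then condition on $G_4[X]$ (and thus on the pair $(S, \{T_s\}_{s \in S})$), leaving the edges of $G_4$ joining $\{s_H, t_H\}$ to $X$ unexposed, independent, and each present with probability $p_4 = (1+o(1))\log n / n$. Since $(1-p_4)^{|S|} \le \exp(-(1+o(1))\log n / 4) = o(1)$, with high probability $s_H$ has a neighbour $s^* \in S$. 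Exposing the edges from $s_H$ and fixing such $s^*$, the edges from $t_H$ to $X$ are still independent of everything conditioned on, so by the same calculation $t_H$ has a neighbour $t^* \in T_{s^*}$ with high probability.

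The desired path $P$ is then formed by concatenating the edge $\{s_H, s^*\}$, the Hamilton path of $G_4[X]$ from $s^*$ to $t^*$ guaranteed by Lemma~\ref{thm:kriv}, and the edge $\{t^*, t_H\}$; its vertex set is exactly $X \cup \{s_H, t_H\}$, as required. I expect the only delicate point to be the verification that $p_4$ remains above the Hamiltonicity threshold after deleting $V(H_3)$, which is precisely the reason the $10\sqrt{\log n}/n$ correction was placed in $p_4$; everything else is a routine two-stage exposure argument on top of Lemma~\ref{thm:kriv}.
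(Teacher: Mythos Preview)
Your proposal is correct and follows essentially the same approach as the paper's proof: verify that $|X|p_4$ exceeds the Hamiltonicity threshold (this is exactly where the $10\sqrt{\log n}/n$ surplus in $p_4$ is used), apply Lemma~\ref{thm:kriv} to $G_4[X]$ to obtain the sets $S$ and $T_s$, and then expose the still-fresh $G_4$-edges from $s_H$ and $t_H$ into $X$ to attach the endpoints. The only difference is cosmetic bookkeeping in the threshold calculation.
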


\begin{proof}
Consider the induced subgraph $G_4[X]\sim G(|X|,p_4)$. We have
\begin{eqnarray*}
|X|\cdot p_4 & \ge & (n-2L)\cdot \left( \frac{\log n + 10\sqrt{\log n}}{n} \right) \\
& \ge & \log n \cdot \left( 1-\frac{4}{\sqrt{\log n}} \right)\cdot \left( 1+\frac{10}{\sqrt{\log n}} \right) \\
&=& \log n + \omega (\log \log n)\\
&=& \log |X| + \omega (\log \log |X|)
\end{eqnarray*}

Therefore, by Lemma \ref{thm:kriv}, there is a set $S\subseteq X$ with $|S| = \frac{1}{4}|X| \ge \frac{1}{5}n$, and for every $s\in S$ there is a subset $T_s\subseteq X$ with $|T_s| \ge \frac{1}{4}|X| \ge \frac{1}{5}n$, such that there is a Hamilton path in $G_4[X]$ between $s$ and $t$ for every $t\in T_s$.

The set $E_{G_4}(s_H,X)$ has not yet been sampled. The probability that $s_H$ has no neighbour in $S$ is at most $(1-p_4)^{n/5}=o(1)$. Assume that there is one, and denote it by $s$. Similarly, the probability that $t_H$ has no neighbour in $T_s$ is at most $(1-p_4)^{n/5}=o(1)$, denote such a neighbour by $t$. Now the Hamilton path in $G_4[X]$ between $s$ and $t$, along with the edges $\{s_H,s\},\{t_H,t\}$, constitute a path $P$ with $V(P)=X\cup \{s_H,t_H\}$, as desired.
\end{proof}

Denote the obtained Hamilton cycle $H_3\cup P \setminus \left( \{e_0,...,e_K,e^*,e_{\text{short}}\}\cup \{ e_{i,j} \}_{i,j} \right)$ by $C_H$.

\subsection*{Step 5}

Let $m \coloneqq \lfloor n\cdot 2^{-K} \rfloor$.

\begin{lem}
With high probability $G_5$ contains edges $f_3,...,f_m$, such that the following hold for every $i$.
\begin{enumerate}
	\item There is $\ell _i^* \in \left[ i\cdot 2^K - n^{0.9}, i\cdot 2^K +n^{0.9} \right]$ such that $f_i$ is an $\ell _i*$-shortcut with respect to $C_H$;
	\item The $(\ell _i^* +1)$-path on $C_H$ that connects the vertices of $f_i$ contains $V\left( C^* \cup \bigcup _{i=0}^KC_i \right)$.
\end{enumerate}
\end{lem}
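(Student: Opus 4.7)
My plan is to expose $G_1,\dots,G_4$ first, thereby fixing $C_H$ and all previously constructed substructures, and then use the independence of $G_5$ together with a counting argument to find each $f_i$. Consider the Hamilton cycle $C_H$ and note that, since $P$ in Step 4 is the only part of $C_H$ that enters $X=V(G)\setminus V(H_3)$, the vertices of $V(H_3)$ occupy a single contiguous arc $A$ of $C_H$ between $s_H$ and $t_H$, while the internal vertices of $P$ (namely $X$) occupy the complementary arc $B$. The length of $A$ is $|V(H_3)|-1$; summing the sizes of $C^*$, $C_{\text{short}}$, the paths $Q_{K_0+1},\dots,Q_K$, the paths $P_{i,j}$ and $P^*$, one sees that $|V(H_3)| \le 2^{K+1} + O(\log n\cdot \log\log n)$.

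The crucial observation is the following. If $f=\{u,v\}$ with $u,v\in X$, then in $C_H$ the two arcs connecting $u$ and $v$ are (i) an arc lying entirely inside $B$, of some length $d(u,v)\le |X|-1$, and (ii) an arc that wraps around through the entire special arc $A$, of length $n-d(u,v)$. Since $V(C^*\cup\bigcup_{i=0}^K C_i)\subseteq V(H_3)\subseteq A$, arc (ii) automatically contains the required vertex set. So to realize condition (1) with $\ell_i^*=n-d(u,v)-1$, it suffices to find, for each $i\in[3,m]$, an edge of $G_5$ between two vertices of $X$ whose cyclic distance along $B$ lies in the window $W_i:=[n-i\cdot 2^K-n^{0.9}-1,\ n-i\cdot 2^K+n^{0.9}-1]$.

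I would then count the number of candidate pairs for each $i$. For a fixed distance $d_0\in W_i$, the number of pairs $\{u,v\}\subseteq X$ at cyclic distance exactly $d_0$ along $B$ equals $|X|-d_0 = n-|V(H_3)|-d_0 \ge (i-2)\cdot 2^K - n^{0.9} - O(\log n\cdot \log\log n)$, which for $i\ge 3$ and large $n$ is at least $2^K/2 \ge n/(4\sqrt{\log n})$. Summing over the $2n^{0.9}+1$ distances in $W_i$ gives at least $n^{1.9}/(4\sqrt{\log n})$ candidate pairs. Since $G_5\sim G(n,p_5)$ is independent of $G_1,\dots,G_4$ and the candidate pairs are determined by $C_H$, the probability that none of them is an edge of $G_5$ is at most
\[
(1-p_5)^{n^{1.9}/(4\sqrt{\log n})} \le \exp\!\Bigl(-\tfrac{1}{2}\log\log n\cdot n^{0.9}/\sqrt{\log n}\Bigr) = \exp\!\bigl(-\omega(\log n)\bigr).
\]

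Finally, a union bound over $i=3,\dots,m$ with $m=O(\sqrt{\log n})$ yields the existence of all $f_3,\dots,f_m$ simultaneously with high probability. The windows $W_i$ are pairwise disjoint, because their centers $n-i\cdot 2^K$ are spaced by $2^K\ge n/(2\sqrt{\log n}) \gg n^{0.9}$, hence the edges $f_i$ obtained for different $i$'s are automatically distinct. The only nontrivial arithmetic obstacle is verifying that $|V(H_3)|\le 2^{K+1}+o(2^K)$ so that the window $W_3$ indeed contains valid distances; this follows by the per-piece length estimates above, together with the fact that all lower-order contributions ($K$, $tb$, $tb(d+1)$, $tb^t$, etc.) are $O(\log n\cdot \log\log n)=o(2^K)$.
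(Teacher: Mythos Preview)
Your argument is correct and follows essentially the same approach as the paper: identify a large set of potential edges satisfying both conditions, then use independence of $G_5$ and a union bound over $i$. The paper's proof is a two-line version that simply asserts at least $\tfrac{1}{5}n^{1.8}$ candidate edges exist for each $i$, whereas you spell out the counting by restricting both endpoints to $X$ (which cleanly guarantees condition~(2)) and obtain the slightly sharper bound $\Theta(n^{1.9}/\sqrt{\log n})$; the extra structural observation that $V(H_3)$ sits on a contiguous arc of $C_H$ is exactly the unstated ingredient behind the paper's count.
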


\begin{proof}
For every $3\le i \le m$ there is a set of at least $\frac{1}{5}\cdot n^{1.8}$ potential edges that satisfy the conditions. The probability that there is $3\le i\le m$ for which none of these edges appears in $G_5$ is at most
$$
m\cdot (1-p_5)^{n^{1.8}/5} = o(1). 
$$
\end{proof}

\begin{lem}
With high probability $G_5$ contains an $(\ell -2)$-shortcut with respect to $C_H$ for every $\ell \in [3,(d+b) \cdot t]$.
\end{lem}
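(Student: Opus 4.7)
The plan would be a straightforward first-moment / union-bound argument, exploiting the fact that $G_5$ is sampled independently of $G_1,\dots,G_4$ and hence of $C_H$. I would condition on the realization of $G_1,G_2,G_3,G_4$ (which determines $C_H$) and argue separately for each target length $\ell \in [3,(d+b)\cdot t]$.

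For a fixed $\ell$, observe that an $(\ell-2)$-shortcut with respect to $C_H$ is precisely an edge joining two vertices at cyclic distance $\ell-1$ on $C_H$. Since
$$\ell - 1 \le (d+b)\cdot t = O\!\left( \frac{\log n}{\log \log \log n} \right) \ll n/2,$$
there are exactly $n$ such potential edges, and none of them is already an edge of $C_H$ (whose edges have cyclic distance $1$). Moreover, since $G_5$ is independent of the previous samples, each such potential edge appears in $G_5$ independently with probability $p_5 = 2\log\log n/n$, regardless of the realization of $C_H$.

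The probability that none of the $n$ candidate edges lies in $G_5$ is then at most $(1-p_5)^n \le e^{-np_5} = (\log n)^{-2}$. Taking the union bound over all $\ell \in [3,(d+b)\cdot t]$ gives a failure probability at most
$$(d+b)\cdot t \cdot (\log n)^{-2} = O\!\left( \frac{1}{\log n \cdot \log \log \log n} \right) = o(1),$$
which completes the argument.

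I do not anticipate a genuine obstacle: the density $p_5 = 2\log\log n/n$ was set with plenty of slack relative to the tiny range of $\ell$ under consideration, and the independence of $G_5$ from the earlier samples decouples this step entirely from Steps 1--4. The only verification worth flagging is that candidate shortcuts for different $\ell$ involve disjoint sets of potential edges and do not collide with edges of $C_H$ itself, both of which follow immediately from the cyclic-distance description above.
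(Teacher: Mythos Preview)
Your argument is correct and follows essentially the same route as the paper: fix $\ell$, bound the probability that none of the $n$ candidate shortcuts lies in $G_5$ by $(1-p_5)^n=o(1/\log n)$, and take a union bound over the $(d+b)\cdot t=o(\log n)$ values of $\ell$. The paper's proof is just a terser version of what you wrote.
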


\begin{proof}
For a given $\ell \in [3,(d+b)\cdot t]$, the probability that such an $(\ell -2)$-shortcut does not exist is $(1-p_5)^n=o(1/\log n)$, and by the union bound we obtain the lemma, as $(d+b)t=o(\log n)$.
\end{proof}

\section{Proof of Theorem \ref{thm:main}} \label{sec:main}

The following lemma, referring to the subgraph $H_5\subseteq G$ described in Section \ref{sec:outline} and whose construction is shown to be possible with high probability in Section \ref{sec:construction}, completes the proof of Theorem \ref{thm:main}.

\begin{lem}
The subgraph $H_5$ is pancyclic.
\end{lem}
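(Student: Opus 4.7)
The plan is to show that for every $\ell \in [3, n]$, the subgraph $H_5$ contains a cycle of length $\ell$, by partitioning $[3, n]$ into four overlapping ranges and handling each with a different substructure built in Section~\ref{sec:outline}. The Hamilton cycle $C_H \subseteq H_5$ itself supplies the cycle of length $n$.

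For the ``binary range'' $\ell \in [\ell^*, \ell^* + L]$, I would use $C^*$ together with the detour paths $R_j := C_j \setminus \{e_j\}$ (for $j \leq K_0$) and $Q_j$ (for $K_0 < j \leq K$). Writing $\ell - \ell^* = \sum_{j \in S} 2^j$ in binary with $S \subseteq \{0, \ldots, K\}$, replace each edge $e_j$ of $C^*$ for $j \in S$ by its detour. Since the detours are pairwise internally disjoint from each other and from $C^*$ (by Lemmas~\ref{lemma:shortpaths} and \ref{lem:step3long}, together with the Step~1 disjointness of $C_0, \ldots, C_{K_0}$), each such replacement preserves the cycle structure and shifts the length by exactly $2^j$, producing a cycle of length $\ell^* + \sum_{j \in S} 2^j = \ell$. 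An exactly analogous scheme applied to $C_{\text{short}}$, the internally disjoint paths $P_{i,j}$ (Lemma~\ref{lem:step3short}), and the base-$b$ expansion of $\ell - (d+b+1)t - 1$, handles $\ell \in [(d+b+1)t+1, (d+b+1)t + b^t]$; the remaining short lengths $\ell \in [3, (d+b+1)t]$ come for free from $C_H \cup \{g_\ell\}$ by the definition of $g_\ell$ as an $(\ell-2)$-shortcut.

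For the large range $\ell \in [\ell^*+L+1, n]$, I would invoke the dual ``replace short arc by chord'' move. Each $e_j$ ($0 \leq j \leq K$) is a chord of $C_H$ whose short arc on $C_H$ is precisely its detour path of length $\ell_j = 2^j + 1$, and these $K+1$ short arcs are pairwise vertex-disjoint sub-paths of the ``apparatus arc'' of $C_H$ (the contiguous sub-path traversing $V(C^* \cup \bigcup_j C_j)$, as the construction of $C_H$ in Step~4 makes manifest). Hence, for any $S \subseteq \{0, \ldots, K\}$, simultaneously replacing each of these short arcs by the chord $e_j$ yields a cycle of length $n - \sum_{j \in S} 2^j$, covering $[n - L, n]$. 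Next, for each $3 \leq i \leq m$, the chord $f_i$ together with the length-$(\ell_i^* + 1)$ arc of $C_H$ between its endpoints (which contains the apparatus arc by the construction in Step~5) forms a cycle of length $\ell_i^* + 2$; applying the same short-arc replacements inside this cycle delivers every cycle length in $[\ell_i^* + 2 - L, \ell_i^* + 2]$.

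The main obstacle is verifying that the four subranges cover $[3, n]$ with no gaps; this reduces to four parameter inequalities: (i) $(d+b+1)t + b^t \geq \ell^*$ so the base-$b$ range meets $[\ell^*, \ell^* + L]$; (ii) $\ell_3^* + 2 - L \leq \ell^* + L$ so the $f_3$-interval meets $[\ell^*, \ell^* + L]$; (iii) $\ell_{i+1}^* - \ell_i^* \leq L$ so consecutive $f_i$-intervals overlap; and (iv) $\ell_m^* + 2 \geq n - L$ so the chain meets the top range $[n-L, n]$. Each follows from the parameter estimates in Section~\ref{sec:outline}, using that $L = 2^{K+1} - 1 \approx 2n/\sqrt{\log n}$, $2^K \approx n/\sqrt{\log n}$, $n^{0.9} \ll 2^K$, and $\ell^* = O(\log n)$.
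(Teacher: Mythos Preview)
Your proposal is correct and follows essentially the same route as the paper: the same four-part case split (the $g_\ell$ shortcuts, the base-$b$ encoding via $C_{\text{short}}$ and the $P_{i,j}$, the binary encoding via $C^*$ and the $C_j$, and the chord-for-arc replacements inside $C_H$ and the $f_i$-cycles), followed by the same chain of parameter inequalities to verify that the ranges cover $[3,n]$. One very minor imprecision: the $K+1$ short arcs on $C_H$ need not be pairwise \emph{vertex}-disjoint (adjacent $e_j$'s on $C^*$ may share an endpoint), only edge-disjoint as sub-paths of $C_H$; this is harmless for the replacement argument, and the paper's formulation via $\left(C_H \setminus \bigcup_{k_i=1} C_i\right)\cup\{e_i : k_i=1\}$ sidesteps the issue.
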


\begin{proof}
Let $\ell \in [3,n]$. We show that $H_5$ contains a cycle of length $\ell$. We divide the proof into cases based on a subinterval of $[3,n]$ that $\ell$ resides in. The subintervals are covering $[3,n]$ but not necessarily disjoint, so $\ell$ may be covered by more than one subinterval.

\begin{itemize}
\item If $\ell \in [3,(d+b+1)\cdot t]$, then $g_{\ell}$ is an $(\ell -2)$-shortcut with respect to $C_H$, so that $g_{\ell}$ and its accompanying $(\ell-1)$-path form an $\ell$-cycle.

\item If $\ell \in [(d+b+1)\cdot t+1,(d+b+1)\cdot t+b^t]$, let $k=\ell -(d+b+1)\cdot t-1$, so $0\le k\le b^t-1$ can be encoded in base $b$ using $t$ digits. Let $(k_{t-1},k_{t-2},...,k_1,k_0)$ be its encoding, that is, $0\le k_i \le b-1$ for all $i$, and $k=\sum _{i=0}^{t-1}k_ib^i$. Then
$$
\{e_{\text{short}}\} \cup \bigcup _{j=k_i}P_{i,j} \cup \bigcup _{j\neq k_i}\{ e_{i,j}\}
$$
is a cycle of length $\ell$ in $H_5$. Indeed, it is a cycle, since it is the result of replacing a subset of the edges of $C_{\text{short}}$ with internally disjoint paths, and its length is
$$
1+(b-1)\cdot t+\sum _{i=0}^{t-1}e(P_{i,k_1}) = 1+(b-1)\cdot t+\sum _{i=0}^{t-1}\left( d+2+k_ib^i \right) =1+(d+b+1)\cdot t+k=\ell.
$$

\item If $\ell \in [\ell ^*,\ell^*+L]$, let $k=\ell-\ell^*$, so $0\le k \le 2^{K+1}-1$ can be encoded by $K+1$ binary digits, say $k=\sum_{i=0}^{K} k_i2^i$, where $k_i\in \{0,1\}$. Then
$$
\left( C^* \cup \bigcup _{i:k_i=1}C_i \right) \setminus \{ e_i \mid k_i=1\}
$$
is a cycle in $H_5$ with length $\ell$. It is a cycle because it is the result of replacing edges of $C^*$ with internally disjoint paths. The length is indeed
$$
\ell^*+\sum _{i=0}^Kk_i\cdot (e(C_i)-1) = \ell^*+\sum_{i=0}^{K} k_i2^i=\ell^*+k=\ell.
$$

\item If $\ell \in \left[ \left(\frac{1}{2}i-\frac{4}{5}\right) \cdot L,\left(\frac{1}{2}i-\frac{1}{5}\right) \cdot L \right]$, where $3\le i \le m$, then in particular $\ell \in \left[ \ell_i^*+2-L, \ell_i^*+2 \right]$. Similarly to the previous case, let $\ell_i^*+2-\ell =k=\sum_{i=0}^{K} k_i2^i$, where $k_i\in \{0,1\}$. Denote by $C_i^*$ the cycle of length $\ell_i^*+2$ comprised of $f_i$ and its accompanying $(\ell_i^*+1)$-path. Then
$$
\left( C_i^* \setminus \bigcup _{i:k_i=1}C_i \right) \cup \{ e_i \mid k_i=1\}
$$
is a cycle of length $\ell^*+2-k=\ell$.

\item If $\ell \in [n-L,n]$ then for $n-\ell =k=\sum_{i=0}^{K} k_i2^i,\ k_i\in \{0,1\}$, we get a cycle
$$
\left( C_H \setminus \bigcup _{i:k_i=1}C_i \right) \cup \{ e_i \mid k_i=1\}
$$
of length $n-k=\ell$.
\end{itemize}

Observe that
\begin{align*}
(d+b+1)\cdot t + b^t  &\ge  b^{\log _b\log n} = \log n \ge \ell ^*\ ;\\
\ell ^* + L  &\ge  \left( \frac{1}{2}\cdot 3-\frac{4}{5} \right) \cdot L\ ;\\
\left( \frac{1}{2}\cdot i-\frac{1}{5} \right) \cdot L  &\ge  \left( \frac{1}{2}\cdot (i+1)-\frac{4}{5} \right) \cdot L ;\ \\
\left( \frac{1}{2}\cdot m-\frac{1}{5} \right) \cdot L  &\ge \frac{1}{2}(n\cdot 2^{-K}-1)\cdot (2^{K+1}-1)-\frac{1}{5}L \ge n-\frac{1}{2}L-\frac{1}{5}L-O(\sqrt{\log n}) \ge n-L\ ;
\end{align*}
and therefore the subintervals indeed cover $[3,n]$, and so $H_5$ is pancyclic.
\end{proof}

\end{document}